\theoremstyle{plain}
\newtheorem{theorem}{Theorem}[section]
\newtheorem{lemma}[theorem]{Lemma}
\newtheorem*{claim*}{Claim}
\newtheorem{cor}[theorem]{Corollary}
\newtheorem{claim}[theorem]{Claim}
\newtheorem{prop}[theorem]{Proposition}
\theoremstyle{remark}
\newtheorem{defn}[theorem]{Definition}
\newtheorem{question}[theorem]{Question}
\newtheorem{remark}[theorem]{Remark}
\newcommand{\E}{\mathbf{E}}
\newcommand{\Prob}{{\mathbf{P}}}
\newcommand{\rtt}{\mathbf{0}}
\newcommand{\TFM}{{\sf TFM}}
\newcommand{\ATFM}{{\sf ATFM}}
\newcommand{\FM}{{\sf FM}}
\newcommand{\Poiss}{\mathrm{Poiss}}
\newcommand{\SRW}{{\sf SRW}}
\newcommand{\SRWS}{{\sf SRWs}}
\newcommand{\GW}{{\sf GW}}
\newcommand{\AGW}{{\sf AGW}}
\newcommand{\TPPT}{{\sf TreePathParticlesTrajectories}}
\newcommand{\TP}{{\sf TreePath}}
\newcommand{\F}{\mathcal{F}}
\newcommand{\one}{\mathbf{1}}
\newcommand{\HARM}{{\sf HARM} }
\newcommand{\cE}{\mathcal{E}}
\newcommand{\RED}{}
\newcommand{\BLUE}{}
\renewcommand{\epsilon}{\varepsilon}
\newcommand{\eps}{\varepsilon}
\newcommand{\bT}{\mathbf{T}}
\renewcommand{\P}{\mathbf{P}}
\newcommand{\Par}{\overleftarrow}
\begin{document}

	\title[The frog model on Galton-Watson trees]{The frog model on Galton-Watson trees}
	
	\author{Marcus Michelen}
	\address{\tiny{Department of Mathematics, Statistics, and Computer Science, University of Illinois, Chicago}}
	\email{michelen@uic.edu, michelen.math@gmail.com}
	\author{Josh Rosenberg}
	\address{}
	\email{joshrsix@hotmail.com}

		\begin{abstract}
			We consider an interacting particle system on trees known as the frog model: initially, a single active particle begins at the root and i.i.d.~$\mathrm{Poiss}(\lambda)$ many inactive particles are placed at each non-root vertex.  Active particles perform discrete time simple random walk and activate the inactive particles they encounter.  We show that for Galton-Watson trees with offspring distributions $Z$ satisfying $\Prob(Z \geq 2) = 1$ and $\E[Z^{4 + \epsilon}] < \infty$ for some $\epsilon > 0$, there is a critical value $\lambda_c\in(0,\infty)$ separating recurrent and transient regimes for almost surely every tree, thereby answering a question of Hoffman-Johnson-Junge.  In addition, we also establish that this critical parameter depends on the entire offspring distribution, not just the maximum value of $Z$, answering another question of Hoffman-Johnson-Junge and showing that the frog model and contact process behave differently on Galton-Watson trees.
		\end{abstract}

	\maketitle 
	\section{Introduction}
	
	The frog model refers to a particular kind of system of interacting random walks on a rooted graph.  In its initial state, it features a single active particle at the root, and some collection of inactive particles distributed 
	among the non-root vertices.  The active particle at the root begins performing a discrete time simple random walk on the graph, and any time an active particle lands on a vertex containing inactive particles, they all become activated and begin performing their own independent discrete time simple random walks, activating any sleeping particles that they encounter along the way.  The particles in this system are often referred to as frogs, with active particles deemed ``awake'' and inactive particles ``sleeping.''
	
	The frog model is a very simple model of the spread of infection.  Additionally, it bears resemblance to activated random walk---a fundamental and well-studied interacting particle system on graphs (see the survey \cite{rolla} for more context)---in which active particles later become inactive.  As such, the frog model may be understood as a more active version of activated random walk, and indeed can be used to stochastically dominate activated random walk.
	
	On infinite graphs, studies of the frog model have often focused on determining whether it is recurrent (meaning almost surely infinitely many active particles hit the root) or transient (meaning almost surely only finitely many active particles ever hit the root).  Much work has been done on the frog model on $\mathbb{Z}^d$: in \cite{TW}, Telcs and Wormald showed that the one frog per site model on $\mathbb{Z}^d$ is recurrent for every $d\geq 1$.  This was extended in \cite{alves} to show recurrence for any i.i.d. configuration of frogs on $\mathbb{Z}^d$.  In order to obtain transitions from recurrence to transience on $\mathbb{Z}^d$, one may take the density of frogs to be non-uniform \cite{popov-random} or bias the walks in a given direction \cite{DP,gantert-schmidt,GNR}.
	
	On trees, the story is quite different: in a breakthrough work \cite{HJJ2}, Hoffman, Johnson and Junge  demonstrated that the one-frog-per-vertex model is recurrent on the $d$-ary tree for $d=2$, and transient for $d\geq 5$; the cases of $d=3,4$ remain open.  Likewise, in \cite{HJJ1}, Hoffman, Johnson, and Junge showed that if $\Poiss(\lambda)$ sleeping frogs are placed at each vertex on a $d$-ary tree, recurrent and transient regimes may be found as $\lambda$ varies for each $d \geq 2$.  They comment, ``we believe that the most interesting aspect of this work is that the frog model on trees is teetering on the edge between recurrence and transience.''
	
	In the case of both the one frog per-site model on the $d$-ary tree studied in \cite{HJJ2}, as well as the i.i.d. $\text{Poiss}(\lambda)$ frogs per-site version from \cite{HJJ1}, the proofs of transience employed a reasonably straightforward approach that involved embedding the model in a more standard branching random walk model, and then establishing transience of the branching random walk via a martingale argument.  Conversely, the proofs of recurrence from both \cite{HJJ2} and \cite{HJJ1} tended to require significantly more ingenuity, featuring clever bootstrapping arguments that rely heavily on the self-similarity properties of regular trees.  More generally, studies of the frog model on infinite graphs have almost always involved graphs with near-perfect symmetry.  While there have been efforts to break free of this constraint, they have been limited in scope.  One such attempt was made by the second author in \cite{Rosenberg}, where he established recurrence for the one per-site model on the $3,2$-alternating tree.  Yet even in this case, the proof exploits the self-similarity of the bi-regular tree, specifically relying on the fact that (as with all of the other graphs referenced above) the $3,2$-alternating tree is quasi-transitive, meaning that the set of vertices can be partitioned into finitely many sets so that for each pair of vertices in the same set there exists a graph automorphism mapping one to the other.
	
	\subsection{Results}
	
	In the present work we examine the frog model with i.i.d. $\text{Poiss}(\lambda)$ sleeping frogs positioned at each non-root vertex of a Galton-Watson tree.  Our main results, which are contained in the following theorem, show that there is a sharp transition from transience to recurrence provided the offspring distribution is always at least $2$ and has sufficiently many moments, while also establishing an asymptotic upper bound on the value of the critical parameter $\lambda_c$.
	
	\medskip
	\begin{theorem}\label{theorem:mrecthjg}
		Let ${\sf GW}$ be the measure on Galton-Watson trees induced by an offspring distribution $Z$ for which $\Prob(Z\geq 2)=1$ and $\E[Z^{4+\epsilon}]<\infty$ for some $\epsilon>0$.  Then there exists a constant $\lambda_c\in(0,\infty)$ such that, for ${\sf GW}$-a.s.\ every {\BLUE tree} ${\bf T}$, the frog model with i.i.d.\ $\mathrm{Poiss}(\lambda)$ frogs per non-root vertex is transient for every $\lambda<\lambda_c$, and recurrent for every $\lambda>\lambda_c$.  Furthermore, the critical parameter $\lambda_c$ satisfies the bound $\log \lambda_c = O(\epsilon^{-1} \log \E Z^{4 + \epsilon} + \epsilon^{-2}  )$ for $\epsilon \in (0,1)$.
	\end{theorem}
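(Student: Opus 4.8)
I would split the theorem into three parts: (a) for $\GW$-a.e.\ $\mathbf{T}$ there is a deterministic threshold $\lambda_c\in[0,\infty]$; (b) $\lambda_c>0$; and (c) $\lambda_c<\infty$, with the stated bound. Part (a) starts from the monotone coupling in $\lambda$ (realize the $\Poiss(\lambda)$ piles as increasing in $\lambda$), which for each fixed tree gives a threshold $\lambda_c(\mathbf{T})\in[0,\infty]$; the real content is that $\lambda_c(\mathbf{T})$ is $\GW$-a.s.\ constant. Since, conditionally on $\mathbf{T}$, recurrence (infinitely many distinct frogs hitting the root) is already a $0$-$1$ event, one is reduced to showing the indicator $h(\mathbf{T})\in\{0,1\}$ is $\GW$-a.s.\ constant. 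This needs care — the frog model is not monotone in the tree — so I would exploit the branching structure: recurrence on $\mathbf{T}$ should be determined, modulo $\GW$-null sets, by the tail of the sequence of sub-frog-model behaviors along the generations, because $\SRW$ on a $Z\ge2$ GW tree is transient and hence only finitely much ``feeds across'' between the finitely many subtrees hanging at any fixed level; this would make $h$ measurable with respect to a trivial tail $\sigma$-field under $\GW$.

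\textbf{Transience for small $\lambda$.} For (b) I would dominate the cloud of activated frogs and their trajectories by a branching random walk on $\mathbf{T}$ in which each particle performs one $\SRW$ step and simultaneously emits $\Poiss(\lambda)$ offspring at its location (dropping the rule that a site is activated only once). A many-to-one computation bounds the expected number of particles of this BRW ever hitting the root by $\sum_v(\text{expected number of particles ever born at }v)\cdot\Prob_v(\SRW\text{ hits the root})$. Transience of $\SRW$ on the GW tree makes the hitting probabilities decay geometrically along rays, and the moment hypothesis $\E[Z^{4+\epsilon}]<\infty$ controls the random fluctuations of $\mathbf{T}$ (for the annealed sum, and, via Borel--Cantelli, for the almost sure version); hence for $\lambda$ small the expected number of returns is finite, giving transience and $\lambda_c>0$.

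\textbf{Recurrence for large $\lambda$.} Part (c) is the crux, and the quantitative bound must come out of it. One cannot simply embed a binary tree (which $\mathbf{T}$ contains) and quote its recurrence as in Hoffman--Johnson--Junge, because on $\mathbf{T}$ frogs leak off any fixed subtree faster than on the subtree alone; a self-contained bootstrap is needed. I would introduce, for each vertex $v$, a random variable $N_v$ lower-bounding the number of frogs returning to $v$ in the frog model on $\mathbf{T}_v$ with one extra active frog injected at $v$, built so that the $N_{v_i}$ over the (at least two) children $v_i$ of $v$ are \emph{conditionally independent} given $\mathbf{T}$. Preserving this independence forces a careful choice of which returning frogs are counted — via a regeneration/stopping construction that discards correlated trajectories while keeping a definite fraction of them — and yields a distributional recursion $N_v\succeq F\bigl((N_{v_i})_i;\ \text{i.i.d.\ }\Poiss(\lambda)\text{ piles};\ \SRW\text{ edge-crossings}\bigr)$. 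The goal is then to show that for $\lambda$ large the induced operator on distributions has no finite fixed point — equivalently $\E[N_v\wedge m]$ grows without bound as one iterates up the tree — which forces $N$ at the root to be infinite, i.e.\ recurrence. Quantitatively, one tracks the factor by which the ``useful'' frog count is multiplied per generation: roughly $Z$ fresh $\Poiss(\lambda)$ piles are created, each returning, after its edge-crossing and subtree dynamics, a number growing with $\lambda$; this factor must exceed $1$ even after paying for atypically high-degree vertices, and estimating their contribution by Markov's inequality applied to $Z^{4+\epsilon}$ with an $\epsilon$-optimized truncation level gives $\log\lambda_c=O(\epsilon^{-1}\log\E Z^{4+\epsilon}+\epsilon^{-2})$.

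\textbf{Main obstacle.} The principal difficulty is making the recursion in (c) genuinely \emph{self-improving} rather than merely self-consistent: frogs returning to $v$ are not fresh — their futures are correlated with their pasts and with one another — so the regeneration construction must salvage enough independent mass at every level while keeping all error terms quantitatively controlled by $\epsilon$ and the moments of $Z$. Getting this bookkeeping to close with the claimed dependence on $\epsilon$, in particular producing the $\epsilon^{-2}$ term, is the delicate point; the $0$-$1$ law of part (a) is a secondary technical hurdle, owing to the lack of monotonicity in the tree.
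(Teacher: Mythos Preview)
Your decomposition into (a)--(c) matches the paper's, and (b) is close in spirit, but (a) and especially (c) diverge in ways that matter.

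For the $0$--$1$ law, the paper does not argue via a tail $\sigma$-field. Instead it passes to the \emph{augmented} Galton--Watson measure $\AGW$, builds the measure-preserving system $(\TPPT,\AGW\times\SRW\times\Poiss_\lambda\times\SRWS,S)$ where $S$ shifts along the SRW path from the root, proves this system is mixing (hence ergodic) using the regeneration/slab structure of \cite{LPP-biased}, and deduces that recurrence has probability $0$ or $1$ under $\AGW$; two short coupling lemmas then transfer this to $\GW$. Your claim that recurrence is tail-measurable in the generation filtration is not justified: the set of activated sites, and hence the return count, depends nontrivially on the first few generations (e.g.\ through the degree of the root), and ``only finitely much feeds across'' does not by itself place the event in the tail.

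For transience, your branching-random-walk domination is the right idea, but the paper's argument is both simpler and stronger: a direct supermartingale $W_n=\sum_{f\in F_n}\alpha^{|f|}$ gives transience whenever $\E[\eta]<(k-1)^2/(4k)$, with $k$ the minimum degree. No moment assumption on $Z$ is used or needed for this half; your invocation of $\E[Z^{4+\epsilon}]$ here is spurious.

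The real gap is (c). What you propose is essentially to transplant the Hoffman--Johnson--Junge distributional recursion on the number $N_v$ of returns from regular trees to Galton--Watson trees, patching the loss of self-similarity with an unspecified ``regeneration/stopping construction.'' The paper explicitly abandons this route and replaces it with a different bootstrap: (i) it passes to a \emph{truncated} frog model in which activated particles perform \emph{loop-erased} random walk and are killed upon stepping onto an already-visited vertex --- this is what cleanly removes the re-entry correlations you worry about; (ii) the dependence of the subtree dynamics on the outside tree is handled not by regeneration but by introducing a one-parameter family $\TFM^{(\lambda,p)}_{T^+}$ (attach a leaf to the root with killing probability $p$) and taking $\min_{1/2\le p<1}$, which decouples the subtree from its complement; (iii) most importantly, the quantity being bootstrapped is not a return count but the annealed activation probability along a \emph{harmonic-measure} ray,
\[
\E_{\AGW\times\HARM_{\mathbf T}}\Bigl[\min_{1/2\le p<1}\TFM^{(\lambda,p)}_{\mathbf T^+(v_n)}\bigl(v_{n+1}\text{ activated}\bigr)\Bigr],
\]
and the self-improvement is from $1-e^{-\alpha}$ to $1-e^{-(\alpha+q)}$. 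The harmonic-measure weighting is precisely what makes the recursion close despite random degrees; the proof splits on whether $|\mathbf T_1|<\sqrt{\lambda}$ and uses a concentration lemma for the harmonic mass of high-degree vertices. The moment hypothesis $\E[Z^{4+\epsilon}]<\infty$ enters when bounding $\Prob(|\mathbf T_1|\ge m')$ with $m'=e^{\alpha/(4+\epsilon/2)}$ via Markov at exponent $4+\epsilon$, and the $\epsilon^{-2}$ arises from balancing the resulting error terms against the depth parameter $m=\lfloor e^{\alpha/2}\rfloor$. None of this machinery --- loop-erasure, the $p$-parametrized variant, or the harmonic-measure bootstrap --- appears in your sketch, and there is no indication that a return-count recursion of the kind you describe can be made to close on non-regular trees.
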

	
	\medskip
	Theorem \ref{theorem:mrecthjg} answers a question posed by Hoffman, Johnson, and Junge in \cite{HJJ1} that involved asking whether or not their recurrence and transience results for the frog model on regular trees can be extended to the Galton-Watson case.  A further question in \cite{HJJ1} asks if recurrence on Galton-Watson trees depends on the entire degree distribution or only the maximal degree; the upper bound on $\lambda_c$ stated in Theorem \ref{theorem:mrecthjg} is likely far from optimal, but is good enough to show that recurrence must in fact depend on the entire degree distribution, rather than just the maximal degree (Corollary \ref{cor:max-degree-example}).  This is in stark contrast to the contact process, where the critical probability for local survival on a Galton-Watson tree depends only on the maximum degree \cite[Proposition $2.5$]{pemantle-stacey}.  
	
	The two most difficult parts of proving Theorem \ref{theorem:mrecthjg} are showing the existence of a recurrent regime, along with proving that no intermediate regime may exist: in particular, we show in Theorem \ref{th:0-1-GW} that $\GW$-a.s., infinitely many particles hit the root with probability either $0$ or $1$.  We also show in Section \ref{ss:no01} that there is no hope of upgrading this $0$-$1$ law for \emph{every} possible tree.  As such, the probabilistic uniformity of Galton-Watson trees plays a crucial role in our proof.  In the case of proving recurrence, we take inspiration from \cite{HJJ1,HJJ2} and replace the trajectory of each particle with its loop-erased version.  After this, our proof diverges radically from the methods of \cite{HJJ1,HJJ2}: whereas in their case the self-similarity of the $d$-ary tree allowed them to work on an extremely local level, there is no opportunity to do so on Galton-Watson trees.  Further, the behavior of random walk on such trees very much depends on the particular instance of the tree---as opposed to on regular trees, in which random walk exhibits a high degree of symmetry---and thus rather than taking the approach of examining the number of particles that visit a given vertex, we instead weigh each activated particle roughly according to the probability that it visits the root.  
	
	We begin the process of proving Theorem \ref{theorem:mrecthjg} in Section \ref{sec:0-1}, where we establish the aforementioned $0$-$1$ law for the frog model on Galton-Watson trees which allows us to rule out the possibility of a non-trivial intermediate phase between transience and recurrence for {\RED almost}-every {\RED Galton-Watson tree} ${\bf T}$.  To prove this result we first focus on augmented Galton-Watson trees, using an ergodic theory argument and  {\RED decomposition} from \cite{LPP} and \cite[Chapter 17]{LP}, in order to establish a $0$-$1$ law that applies ${\sf AGW}$-a.s. {\BLUE (where ${\sf AGW}$ refers to the measure for augmented Galton-Watson trees)}.  We then establish the desired result by showing that recurrence for {\RED almost-every \emph{augmented} Galton-Watson tree} implies {\RED recurrence for almost-every Galton-Watson tree}.
	
	For the proof of transience in Section \ref{sec:transience}, which in fact applies to \emph{every} tree {\BLUE $T$} generated by our offspring distribution $Z$, we essentially adapt the approach that was employed by Hoffman, Johnson, and Junge in \cite{HJJ1} to prove the existence of a transient regime on the regular $d$-ary tree.  This technique involves first coupling the Poisson frog model on $T$ with branching random walk.  We then introduce a weight function that allows us to construct a supermartingale out of this branching random walk model.  This is then used to show that for sufficiently small Poisson mean $\lambda$, the branching random walk model is transient on $T$, which by virtue of stochastic dominance, implies that the original frog model is as well.
	
	The proof of recurrence is more challenging, and occupies Sections \ref{sec:tools}, \ref{sec:proofmain} and \ref{sec:proof-prop}.  In contrast to \cite{HJJ1,HJJ2}, rather than bootstrapping the number of visits to the root, we instead bootstrap the conditional probability that a randomly selected vertex is visited given that its parent is visited.  Further, the recurrence result need not hold on \emph{every} Galton-Watson tree, only on \emph{almost-every} Galton-Watson tree, and so we therefore work with an annealed probability distribution that incorporates the randomness of both the tree and the frog model simultaneously.  The key step in the bootstrapping argument is to focus not on the \emph{number} of particles that are awakened, but rather on the \emph{harmonic measure} of the set of vertices visited on each level of the tree.  The reason for this is that the harmonic measure and return probabilities are typically roughly comparable on Galton-Watson trees (see Lemma \ref{lem:crphmjg} for a precise statement).  Thus, knowing that each vertex in a set with large harmonic measure is visited will imply that a large number of awakened particles will return to the root. Ultimately, this technique then allows us to show that, for sufficiently large Poisson mean $\lambda$, all non-root vertices in our randomly generated tree {\BLUE ${\bf T}$} are activated with probability $1$, from which recurrence follows easily.  
	
	Section \ref{sec:tools} provides three sets of preliminary tools for our proof of recurrence: we will need to compare the harmonic measure with hitting probabilities of random walk; we will need to bound the probability that Galton-Watson trees have a large portion of their harmonic measure given by vertices with many children; and finally we will need to compare different measures on random trees.  
	
	In Section \ref{sec:proofmain} we first introduce two variants of the frog model, which we call the truncated frog model and the augmented truncated frog model. The truncated frog model is constructed by altering the dynamics of the random walks performed by activated particles; most significantly, in the truncated frog model, particles perform \emph{loop-erased random walk} rather than simple random walk, following the lead of \cite{HJJ1,HJJ2}.  We then show that the two models can be coupled in such a way that the number of returns to the root in the ordinary model stochastically dominates that of the truncated frog model (Lemma \ref{lem:truncated-frog}).   Once we have begun working with the truncated frog model, we will want to understand the probability that a vertex is activated given that its parent is activated.  To analyze this event, we introduce the augmented truncated frog model, which is tailored specifically to understand the distribution of the truncated frog model in the subtree $T(v)$ given that $v$ is activated (see Lemma \ref{lem:augmented}).  The main technical engine in our proof of recurrence is a bootstrapping argument for the augmented truncated frog model stated as Proposition \ref{pr:iteratejg}.  In Section \ref{sec:proofmain} we deduce our statement of recurrence (Theorem \ref{theorem:mrecthjg}) from Proposition \ref{pr:iteratejg}.    
	
	In Section \ref{sec:proof-prop} we prove Proposition \ref{pr:iteratejg}, which is the most technical part of the paper. We provide a detailed sketch of the proof in Section \ref{sec:jgsm1}.
	

	The paper concludes with Section \ref{sec:questions}, which features several counterexamples and open questions.  It includes an example of a tree for which the frog model does not have a recurrent regime, thus confirming that for unbounded offspring distributions the almost-sure result in Theorem \ref{theorem:mrecthjg} cannot be extended to \emph{every} Galton-Watson tree.  On top of this, we also construct an example of a rooted tree where each vertex has at least $2$ children for which the frog model does not have a {\BLUE $0$-$1$} law (i.e.\ there is a non-trivial intermediate phase between its recurrent and transient regimes).  These two examples show that the content of Theorem \ref{theorem:mrecthjg} may not be upgraded from an almost-sure statement to a sure statement.

	{\RED \subsection{Notation}
		
		The proof uses many couplings between various altered versions of the frog model in addition to many standard objects from probability on Galton-Watson trees.  For convenience, we review notation here to create a centralized location for the reader to turn back to for reference.
		
		\subsubsection*{Probability on Random Trees}
		Throughout, we will utilize various notation from probability on trees, which we briefly review here.  For a rooted tree $T$ and a vertex $v$, the \emph{level} of $v$ denoted $|v|$ is the distance from $v$ to the root.  We write $T_n$ for the set of level $n$ vertices in {\BLUE $T$}.  The descendant subtree $T(v)$ is the induced subtree consisting of all vertices $w$ for which the shortest path from $w$ to the root passes through $v$.  For a non-root vertex $v$, let $\Par{v}$ denote its \emph{parent}, i.e.\ the neighbor of $v$ lying on the shortest path from $v$ to the root.
		
		Given a rooted tree $T$, let $\partial T$ denote the set of infinite non-backtracking paths in $T$ starting at the root.  If a random walk on $T$ is almost-surely transient then {\BLUE we} say that $T$ is \emph{transient}.  On a transient tree $T$, we define \emph{loop-erased random walk} to be the trajectory of a random walk on $T$ with all loops removed.  The trajectory of a loop-erased random walk on $T$ starting at the root is thus an element of $\partial T$, and so we may define the \emph{harmonic measure} $\HARM:=\HARM_T$ to be the law of this random element of $\partial T$.  For a vertex $v \in T$, $\HARM(v) := \HARM(\{\gamma \in \partial T : v \in \gamma \} )$ denotes the probability that a loop-erased random walk escapes through $v$.  Since supercritical Galton-Watson trees are almost-surely transient (see, e.g., \cite{LP}), we will {\BLUE frequently} make use of the harmonic measure.
		
		For a random variable $Z \in \{0,1,\ldots \}$ with $\E[Z] > 1$, let $\GW$ denote the Galton-Watson measure on rooted trees obtained by conditioning the Galton-Watson process with offspring distribution $Z$ on non-extinction.  Similarly, let $\AGW$ denote the \emph{augmented} Galton-Watson measure, i.e.\ the law of a Galton-Watson tree where we attach an additional Galton-Watson tree to the root and condition on non-extinction.  For each $n \in \mathbb{N}$, we will also consider the measure $\AGW_n$, which is {\BLUE obtained} by sampling from $\AGW$ and {\BLUE then} shifting to a vertex on {\RED level} $n$ sampled via $\HARM$.  In other words, we may sample from $\AGW_n$ by sampling a tree $T$ according to $\AGW$, and performing a loop-erased random walk started at the root; if $v_0,v_1,\ldots$ denotes the trajectory of the loop-erased walk, then we take $T(v_n)$ to be our sample of $\AGW_n$.

		Throughout, $\mathbf{T}$ will denote a random tree which will be taken from $\GW$,  $\AGW$ or $\AGW_n$ which will be explicitly described in context.  For a measure $\mu$---e.g.\ $\GW$ or $\AGW$---{\BLUE we} write $\E_\mu[\cdot]$ to denote the expectation with respect to $\mu$.  
		
		\subsubsection*{Frog Models}
		
		Many altered versions of the frog model will be used in the proof.  A common thread for all models is that the trajectories of particles---once activated---are mutually independent; additionally, the distribution of sleeping frogs is always $\Poiss(\lambda)$.
		
		For a rooted tree $T$ and parameter $\lambda \geq 0$, let $\FM_T^{(\lambda)}$ denote the law of the frog-model on $T$ with i.i.d.\ $\Poiss(\lambda)$ sleeping frogs at each non-root vertex.  In Section \ref{sec:0-1}, we will also consider the frog model $\FM_T^{(\lambda+)}$ where there are $\Poiss(\lambda)$ sleeping frogs at each vertex, including the root.  
		
		The proof of recurrence requires multiple altered versions of the frog model.  The \emph{truncated frog model} defined in Section \ref{sec:truncated} is denoted by $\TFM_T^{(\lambda)}$.  For the purposes of self-similarity, in Section \ref{sec:jgsm3} we also introduce the \emph{augmented truncated frog model} add an extra leaf to the root of $T$ to obtain the tree $T^+$ and alter the dynamics of the walk at the root; this altered model is denoted $\ATFM_{T}^{(\lambda)}$.
		
		The results throughout are \emph{quenched} results, meaning that we consider a random tree ${\bf T}$ and prove statements about various frog models for almost-every ${\bf T}$.  As a result, we will often write $\FM_{\bf T}^{(\lambda)}$ for instance, which denotes the frog model on the random tree ${\bf T}$.  Every time the random tree ${\bf T}$ appears, the corresponding probability measure used to generate ${\bf T}$ will be specified; conversely, we refer to arbitrary trees as $T$.  The difference between the two is not of crucial importance, although we maintain this convention throughout for consistency.
		
	}
	
	\section{A {\BLUE $0$-$1$} Law}\label{sec:0-1}
	
	Our primary goal in this section is to prove a {\BLUE $0$-$1$} law for the frog model on Galton-Watson trees.  Here, we let $\FM^{(\lambda)}_T$ denote the probability measure induced by the frog model on a tree $T$ where $\Poiss(\lambda)$ sleeping frogs are placed at each non-root vertex.  For an offspring distribution $Z$ with $\E[Z] > 1$ and $\P(Z = 0) = 0$ let $\GW$ denote the corresponding Galton-Watson measure on rooted trees and let ${\bf T}$ represent a random rooted tree selected according to $\GW$.  Here we write $\mathrm{recurrent}$ for the event that infinitely many particles visit the root.  
	
	\begin{theorem}\label{th:0-1-GW}
		For any $Z$ with $\E[Z] > 1$ and $\P(Z = 0) = 0$, if $\lambda > 0$ satisfies \\ \noindent  $\E_\GW[\FM^{(\lambda)}_{{\bf T}}(\mathrm{recurrent})  ] > 0$, then $\E_\GW[\FM^{(\lambda)}_{{\bf T}}(\mathrm{recurrent})  ] =1\,.$
	\end{theorem}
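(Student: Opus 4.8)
\section*{Proof proposal}

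The plan is to establish the dichotomy first for the augmented Galton--Watson measure $\AGW$ and then transfer it to $\GW$. At the outset I would record one elementary but crucial reduction, valid on \emph{any} locally finite tree: the event $\{\mathrm{recurrent}\}$ --- infinitely many distinct active particles visit the root --- coincides with the event that \emph{every} vertex is visited by infinitely many distinct particles. Indeed, if a vertex $u$ is visited by infinitely many distinct particles, then at the instant each such particle first lands on $u$ its next increment is a fresh step, uniform over the neighbors of $u$ and independent across the (countably many) particles; the second Borel--Cantelli lemma forces infinitely many of them onto any prescribed neighbor $w$, so $w$ too is visited by infinitely many distinct particles. Iterating along paths shows that the set of vertices visited infinitely often by distinct particles is a.s.\ either empty or all of ${\bf T}$. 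In particular recurrence is insensitive to which vertex we declare to be the origin of the dynamics --- it depends only on the particle trajectories --- which is what opens the door to ergodic-theoretic methods.

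For the $\AGW$ statement I would work on the annealed space whose points are an augmented Galton--Watson tree, an i.i.d.\ family $(N_v)$ of $\Poiss(\lambda)$ sleeping-particle counts, and an independent i.i.d.\ family of simple-random-walk increment sequences --- one attached to each vertex for each potential particle, plus one for the root particle --- carrying the product of $\AGW$ and the Poisson/walk law. Following the framework developed for simple random walk on Galton--Watson trees in \cite{LPP} and \cite[Chapter~17]{LP}, one can put on a suitable version of this space an ergodic, measure-preserving ``environment seen from the root particle'' process, obtained by re-rooting the tree along the steps of the root particle's walk and relabeling the particle data accordingly; this is precisely where the augmentation enters, $\AGW$ being the Galton--Watson-type law compatible with moving the root across an edge. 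By the reduction of the previous paragraph the recurrence event lies in the invariant $\sigma$-field of this process (modulo checking that a single re-rooting step does not alter whether every vertex is visited infinitely often --- the delicate point addressed below), and ergodicity then yields $\E_\AGW[\FM^{(\lambda)}_{{\bf T}}(\mathrm{recurrent})]\in\{0,1\}$.

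To pass between $\GW$ and $\AGW$, one uses that an $\AGW$ tree is obtained from a $\GW$ tree by grafting one extra independent $\GW$ subtree at the root, while conversely a $\GW$ tree appears inside an $\AGW$ tree as the descendant subtree of a child of the root. Combining these inclusions with the propagation principle of the first paragraph --- if every vertex of a subtree $T_v$ is visited infinitely often then so is the parent of $v$, and hence, propagating outward, every vertex --- together with the cascade bound described below, one shows that $\E_\GW[\FM^{(\lambda)}_{{\bf T}}(\mathrm{recurrent})]>0$ forces $\E_\AGW[\FM^{(\lambda)}_{{\bf T}}(\mathrm{recurrent})]>0$ and that $\E_\AGW[\FM^{(\lambda)}_{{\bf T}}(\mathrm{recurrent})]=1$ forces $\E_\GW[\FM^{(\lambda)}_{{\bf T}}(\mathrm{recurrent})]=1$; with the $\AGW$ dichotomy this gives the theorem.

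The step I expect to be the main obstacle is verifying that recurrence is a genuine invariant of the environment process (equivalently, that it is insensitive to finitely supported changes of the initial configuration): on a general tree this fails --- Section~\ref{sec:questions} exhibits trees with a non-trivial intermediate phase --- so the Galton--Watson randomness must be used essentially. The difficulty is that adding a single extra particle can in principle trigger an infinite cascade of activations that is by itself responsible for recurrence. The way around this is to note that such a cascade is stochastically dominated by the frog model seeded by a single particle at the extra particle's location; hence the number of distinct particles reaching any given vertex after a finite modification is at most the original count plus the counts coming from finitely many single-particle-seeded copies of the model. Feeding this comparison back through the $\AGW$ ergodic structure --- so that all the seeded copies are, in the relevant sense, governed by the same law --- is what closes the loop, and making it precise, dovetailed with the re-rooting dynamics of \cite{LPP} and \cite[Ch.~17]{LP}, is the technical core of the proof.
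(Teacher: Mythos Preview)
Your overall plan is the paper's: establish the $0$--$1$ law for $\AGW$ via the environment-seen-from-the-particle machinery of \cite{LPP} and \cite[Ch.~17]{LP}, then transfer to $\GW$. Your propagation principle in the first paragraph is also what the paper uses to pass from ``some vertex is hit i.o.'' to ``the root is hit i.o.''.

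The genuine gap is in how you resolve the obstacle you correctly flag, namely that recurrence must be shown (a.s.) shift-invariant. Your proposed fix --- dominate the effect of a finite modification by finitely many single-particle-seeded frog models and ``feed this back through the $\AGW$ ergodic structure'' --- is circular as written: the single-particle-seeded model at the old root \emph{is}, in law, precisely the process whose recurrence you are trying to decide, so bounding the extra cascade by it yields no information. What you are missing is the elementary \emph{monotonicity of the shift}: applying $S$ truncates the initial particle's path by one step and demotes the $\Poiss(\lambda)$ particles at the old root to sleeping, but every particle that is activated in $S(\omega)$ is also activated in $\omega$ and follows the same trajectory there. Hence any vertex hit i.o.\ in $S(\omega)$ is hit i.o.\ in $\omega$, and by your propagation principle $\omega$ is then recurrent; that is, $S^{-1}\{\mathrm{recurrent}\}\subseteq\{\mathrm{recurrent}\}$. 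Stationarity forces equality a.s., so recurrence lies in the invariant $\sigma$-field and ergodicity finishes. (The paper packages this as: by Birkhoff, a.e.\ $\omega$ has some $S^k\omega$ recurrent; monotonicity plus propagation then makes $\omega$ itself recurrent.) No domination argument is needed.

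A secondary point: your $\GW\leftrightarrow\AGW$ transfers are breezier than they should be. The implication $\E_\AGW=1\Rightarrow\E_\GW=1$ does follow from the subtree inclusion after a short argument (the paper's Lemma~\ref{agwasrigwasr}). But $\E_\GW>0\Rightarrow\E_\AGW>0$ is not immediate from ``a $\GW$ tree sits inside an $\AGW$ tree'': the frog model on the $\AGW$ tree restricted to one child's subtree is \emph{not} the frog model on that subtree, because particles may exit through the root and later return, altering the dynamics inside. The paper handles this (Lemma~\ref{lem:AGW-to-GW}) with a coupling that freezes all but one subtree at a time so that, within each subtree, one genuinely sees a stopped-and-restarted copy of the $\GW$ frog model; you should expect to need something of comparable care.
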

	
	\medskip
	
	{\RED We note that the only property of the distribution of sleeping frogs used in the proof of Theorem \ref{th:0-1-GW} is that they are i.i.d.\ and take the value of $0$ with positive probability.}  The proof of Theorem \ref{th:0-1-GW} is broken into two main parts: first, an ergodic theory argument proves the statement for \emph{augmented Galton-Watson trees}, i.e.\ a Galton-Watson tree where we attach an additional Galton-Watson tree to the root; second, we show that working with augmented Galton-Watson trees is sufficient to establish the result for Galton-Watson trees.  Throughout this section, we assume that $Z$ satisfies $\E[Z] > 1$ and $\P(Z = 0) = 0$; to reduce clutter, we do not restate this assumption for each lemma in this section.
	
	\subsection{The Proof for Augmented Galton-Watson Trees}
	
	The ergodic theory argument that we use is heavily indebted to the groundbreaking work \cite{LPP} and the altered versions that appear in \cite{LPP-biased} and \cite[Chapter 17]{LP}; following their lead, let $\AGW$ denote the augmented Galton-Watson measure on rooted trees.  The purpose of adding an extra child to the root is that now the root---on average---looks the same as any other vertex, thereby making the problem more amenable to ergodic theory arguments; to increase the self-similarity, let $\FM^{(\lambda +)}_T$ denote the measure induced by the frog model on $T$ where $\Poiss(\lambda)$ sleeping frogs are placed at each vertex \emph{including} the root (note that we can think of the $\Poiss(\lambda)$ sleeping frogs added to the root as being immediately activated by the single active frog positioned at the root).  We will work on a large measure space containing all of the information necessary for the frog model: define $\TPPT$ to be the set of rooted trees decorated with an infinite path coming from the root, a non-negative {\BLUE integer} $n_v$ associated to each vertex, and $n_v$ paths starting from each vertex $v$ {\BLUE (for $v$ equal to the root, this is in addition to the infinite path already referenced)}.  Define $\AGW \times \SRW \times \Poiss_\lambda \times \SRWS$ on $\TPPT$ to be the measure where the measure on trees is $\AGW$, the infinite path from the root is assigned the law of an independent simple random walk, the numbers $n_v$ are i.i.d. $\Poiss(\lambda)$, and the laws of the $n_v$ paths are mutually independent simple random walks starting at $v$.  Note that if we place $n_v$ sleeping frogs at each vertex, and use the assigned paths to be their trajectories---should they awaken---and use the path at the root to be the trajectory of the first awake frog, then this measure space can be used for $\FM^{(\lambda +)}_{{\bf T}}$.  
	
	We will decompose $\TPPT$ into the space of trees and paths---which we denote $\TP$---and think of the particles and their trajectories as decorating it; this will allow us to lean on the work of \cite{LPP}.  For a given $\omega \in \TPPT$, define the shift operator $S$ as follows: let $v$ be the first vertex (after the root) along the path component of $\omega$ that is assigned to the root: \begin{itemize}
		\item the tree of $S(\omega)$ is the tree of $\omega$ with root shifted to be $v$.
		\item the path from the root $(x_0,v,x_2,\ldots)$ is changed to $(v,x_2,\ldots)$.
		\item the numbers $n_v$ and other trajectories are unchanged.
	\end{itemize} 
	
	\noindent
	With these definitions in place, we note that 
	
	\begin{lemma}  The system $$(\TPPT,\AGW\times \SRW \times \Poiss_\lambda \times \SRWS, S)$$ is stationary.
	\end{lemma}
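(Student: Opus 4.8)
The plan is to show stationarity of the system $(\TPPT, \AGW \times \SRW \times \Poiss_\lambda \times \SRWS, S)$ by reducing to the known stationarity of the analogous system on the smaller space $\TP$ of trees-with-paths under the corresponding shift, which is exactly the content of the ergodic-theoretic setup in \cite{LPP} (see also \cite[Chapter 17]{LP}). The key observation is that the shift operator $S$ described above acts on the tree-and-path coordinates in precisely the way the $\TP$-shift does, and it leaves the decorations — the numbers $n_v$ and the trajectory paths — essentially alone, merely re-indexing which vertex is the root.

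First I would recall from \cite{LPP, LP} that the system $(\TP, \AGW \times \SRW, S_0)$ is stationary, where $S_0$ is the shift-along-the-path operator on trees decorated with a single infinite path from the root (this is the standard fact that moving the root one step along an $\SRW$ path preserves $\AGW \times \SRW$; it is exactly where the ``augmented'' construction earns its keep, since the extra child makes the root look distributionally like an interior vertex, and where the $\SRW$ reweighting by the degree is absorbed). Next I would observe that $\TPPT$ fibers over $\TP$: a point $\omega \in \TPPT$ is a point $\pi(\omega) \in \TP$ together with, for each vertex $v$ of the underlying tree, a number $n_v$ and an $n_v$-tuple of paths issuing from $v$. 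Under $\AGW \times \SRW \times \Poiss_\lambda \times \SRWS$, conditionally on $\pi(\omega)$ these decorations are an independent family indexed by the vertex set: $n_v \sim \Poiss(\lambda)$ i.i.d., and given $n_v$ the paths are i.i.d.\ $\SRW$s started at $v$. The crucial point is that this conditional law is \emph{equivariant} with respect to re-rooting: the recipe ``put i.i.d.\ $\Poiss(\lambda)$ many i.i.d.\ $\SRW$s at every vertex'' does not reference the root at all, so relocating the root — which is all $S$ does to the decoration coordinates — carries this conditional law to itself.

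To make this precise I would argue as follows. Let $\mu = \AGW \times \SRW$ on $\TP$ and let $\kappa(\cdot \mid t)$ be the (vertex-indexed) product kernel on decorations described above, so that $\AGW \times \SRW \times \Poiss_\lambda \times \SRWS = \mu \otimes \kappa$. The shift $S$ on $\TPPT$ covers $S_0$ on $\TP$, i.e.\ $\pi \circ S = S_0 \circ \pi$, and on the fiber over $t$ it acts by the tree-isomorphism $\phi_t$ that relocates the root to $v = v(t)$ (the first path-vertex after the root). Since $\phi_t$ is a bijection of vertex sets, and $\kappa$ is a product over vertices of a law (Poisson, then $\SRW$ from that vertex) that is covariant under this bijection — the pushforward of an $\SRW$ from the old vertex $w$ under the canonical identification of subtrees is an $\SRW$ from $\phi_t(w)$, and the Poisson factor is unchanged — we get $(\phi_t)_* \kappa(\cdot \mid t) = \kappa(\cdot \mid S_0 t)$. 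Combining this equivariance with the stationarity of $\mu$ under $S_0$ gives $S_*(\mu \otimes \kappa) = S_0{}_*\mu \otimes \kappa = \mu \otimes \kappa$, which is the claimed stationarity. I would then note the one bookkeeping subtlety to dispatch: the path from the old root is truncated from $(x_0, v, x_2, \ldots)$ to $(v, x_2, \ldots)$ rather than being regenerated, but this is precisely what is needed for compatibility with the $S_0$-dynamics on $\TP$ and is handled by the same Markov/memorylessness argument as in \cite{LPP}.

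The main obstacle I expect is not conceptual but notational: setting up the fibration $\pi \colon \TPPT \to \TP$ and the re-rooting isomorphisms $\phi_t$ carefully enough that the covariance identity $(\phi_t)_* \kappa(\cdot\mid t) = \kappa(\cdot\mid S_0 t)$ is manifestly correct, including the measurable dependence of $\phi_t$ on $t$ and the matching of the $n_v$-indexed tuples of paths under re-indexing of vertices. Everything genuinely \emph{probabilistic} — the degree-biasing in $\SRW$, the non-extinction conditioning — has already been done in \cite{LPP} and \cite[Chapter 17]{LP} for the $\TP$ system, so the work here is to check that tacking on an i.i.d.\ field of Poisson counts and $\SRW$ trajectories, in a manner that is blind to the choice of root, preserves stationarity — which, once the fibration is in place, is immediate.
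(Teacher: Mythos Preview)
Your proposal is correct and follows exactly the paper's approach: project to $\TP$, invoke the known stationarity of $(\TP,\AGW\times\SRW,S)$ from \cite[Theorem 17.11]{LP}, and observe that the additional decorations are independent variables not depending on the root, so stationarity lifts. The paper compresses this into two sentences, whereas you spell out the fibration and the equivariance of the decoration kernel more explicitly, but the argument is the same.
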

	\begin{proof}
		If we project to the space $\TP$ that ignores the numbers $n_v$ and associated paths, then the stationarity of $(\TP,\AGW \times \SRW, S)$ is proven in \cite[Theorem 17.11]{LP}; since we have merely decorated the space with independent variables that do not depend on the location of the root, stationarity in our bigger space follows immediately.
	\end{proof}
	
	In addition to the above lemma, \cite{LPP-biased} proves that we in fact can $\AGW \times \SRW$-almost-surely decompose each tree-path pair as a collection of i.i.d.\ \emph{slabs}, which build the tree and path together simultaneously in blocks; {\RED for more details concerning this decomposition, see \cite[Section 17.3]{LP}.  While we do not need a precise definition of slabs, we define them informally: we say that a random walk on a tree \emph{regenerates} when it crosses an edge for the first and last time simultaneously.  The portion of the tree between regeneration points, along with the path of the walk through this portion, is called a slab.  As noted, the slabs are in fact i.i.d.\ when the tree is sampled from $\AGW$ and the walk follows $\SRW$ \cite[Section 17.3]{LP}.}  In particular, this means that for every fixed $n$, there is an almost-surely finite $\tau$ so that the first $n$ {\RED levels} of $S^\tau(\omega)$ as well as the path until exiting this tree are independent of the first $n$ {\RED levels} of $\omega$.  We will use this to show that the above system is in fact ergodic.
	
	Let $\F$ be the $\sigma$-field on which the measure $\mu:= \AGW\times \SRW\times \Poiss_\lambda \times \SRWS$ is defined.  For each natural number $n$, let $\F_{n}$ denote the $\sigma$-field induced by: \begin{itemize}
		\item the first $n$ {\RED levels} of the tree
		\item the path from the root until first exiting the first $n$ {\RED levels}
		\item the particle configuration for the first $n$ {\RED levels}
		\item the trajectories of these particles until first exiting the first $n$ {\RED levels}.
	\end{itemize} 
	
	\noindent
	Since almost-surely all of these paths exit the first $n$ {levels} in finite time, there are only countably many possible configurations for each $n$.  Define $\mathcal{U} := \bigcup_{n} \F_n$ and note that the smallest $\sigma$-field containing $\mathcal{U}$ is $\mathcal{F}$.  
	
	\begin{lemma}
		The system $$(\TPPT,\mu, S)$$ is ergodic.
	\end{lemma}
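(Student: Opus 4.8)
Since $\mu$ is a probability measure, the plan is to show that every $S$-invariant event $A\in\F$ satisfies $\mu(A)=\mu(A)^2$, which forces $\mu(A)\in\{0,1\}$ and hence ergodicity. So fix an $S$-invariant $A$ and let $\varepsilon>0$. Because $\mathcal{U}=\bigcup_n\F_n$ is an algebra that generates $\F$, I would first approximate $A$ by an event $B\in\F_n$ (for some $n$ depending on $\varepsilon$) with $\mu(A\triangle B)<\varepsilon$.

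The core of the argument is to use the i.i.d.\ slab decomposition recalled above to decouple $B$ from a shifted copy of itself. For the chosen $n$, that decomposition (from \cite{LPP-biased}) provides an almost surely finite stopping time $\tau=\tau_n$, and I want to use it in the following form: \emph{(i)} $S^\tau$ preserves $\mu$, since restarting at a genuine regeneration vertex yields a fresh sample from $\mu$; and \emph{(ii)} the $\sigma$-field generated by the first $n$ layers of $S^\tau\omega$, together with the path, the particle configuration and the trajectories there up to exiting layer $n$, is independent of $\F_n$. (The statement for the underlying tree--path pair is exactly what \cite{LPP-biased} supplies; the particles and their trajectories come along for free, being i.i.d.\ decorations that do not see where the root sits.) Writing $D:=(S^\tau)^{-1}(B)=\{\omega:S^\tau\omega\in B\}$, property (ii) makes $D$ independent of $B\in\F_n$, so $\mu(D\cap B)=\mu(D)\,\mu(B)$, while property (i) gives $\mu(D)=\mu(B)$; hence $\mu(D\cap B)=\mu(B)^2$.

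To finish, I would relate $D$ back to $A$. As $A$ is $S$-invariant, for $\mu$-a.e.\ $\omega$ and every $j\ge 0$ we have $\omega\in A\iff S^j\omega\in A$, and applying this with $j=\tau(\omega)$ gives $(S^\tau)^{-1}(A)=A$ up to a $\mu$-null set. Therefore $A\triangle D=(S^\tau)^{-1}(A\triangle B)$, so by (i) $\mu(A\triangle D)=\mu(A\triangle B)<\varepsilon$. Using $A\triangle(D\cap B)\subseteq(A\triangle D)\cup(A\triangle B)$ one gets $|\mu(A)-\mu(B)^2|=|\mu(A)-\mu(D\cap B)|<2\varepsilon$, and separately $|\mu(A)^2-\mu(B)^2|\le 2\,\mu(A\triangle B)<2\varepsilon$; combining, $|\mu(A)-\mu(A)^2|<4\varepsilon$, and letting $\varepsilon\downarrow 0$ yields $\mu(A)=\mu(A)^2$.

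The step I expect to be the main obstacle is producing $\tau$ with \emph{both} (i) and (ii): a generic almost surely finite stopping time need not preserve $\mu$, so the measure-preservation in (i) genuinely relies on $\tau$ being an honest regeneration time of the slab renewal structure, after which $S^\tau\omega$ is distributed exactly as $\omega$ and is independent of everything that came before. Verifying this---and checking that regenerating the tree--path pair drags the particle configuration and the trajectories along with it---is precisely where the slab machinery of \cite{LPP-biased} is doing the work.
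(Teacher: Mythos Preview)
Your argument is correct, but it is organized differently from the paper's. The paper proves the stronger property of \emph{strong mixing}: it first checks $\mu[A\cap S^{-n}B]\to\mu[A]\mu[B]$ for $A,B\in\mathcal{U}$ using the regeneration stopping time, and then extends this to all of $\F$ by two applications of Dynkin's $\pi$--$\lambda$ theorem. You instead prove ergodicity directly, approximating an invariant $A$ by some $B\in\F_n$ and exploiting the identity $(S^{\tau})^{-1}A=A$ together with the independence and measure-preservation of $S^{\tau}$. Both routes rest on the same regeneration/slab input from \cite{LPP-biased}; the paper's version yields a stronger conclusion (mixing), while yours is a more economical path to the statement as written. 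Your flagging of property~(i)---that $S^{\tau}$ genuinely preserves $\mu$, which requires $\tau$ to be an honest regeneration time rather than an arbitrary a.s.\ finite stopping time---is exactly the point where the slab machinery does real work, and the paper's proof leans on it implicitly as well.
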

	\begin{proof}
		In order to show ergodicity, we will prove that the system is \emph{(strong) mixing}, i.e.\ that for each $A, B \in \F$ we have \begin{equation} \label{eq:mixing}
			\lim_{n\to\infty}\mu[A \cap S^{-n} B] \to \mu[A]\mu[B]\,.
		\end{equation}To do this, we will first show \eqref{eq:mixing} for events in $\mathcal{U}$.  Let $A, B \in \mathcal{U}$ and let $n$ be large enough so that both $A, B \in \F_n$.  Since we may find an almost-surely finite stopping time $\tau$ so that shifting $S^{\tau}$ moves sufficiently many slabs away from the root so that the first $n$ {\RED levels} of the system are independent of the first $n$ {\RED levels} of the system before shifting, \eqref{eq:mixing} follows for such $A,B$ from the fact that $\tau < \infty$ a.s. 
		
		To show mixing for all events in $\F$, we use Dynkin's $\pi$-$\lambda$ Theorem.  Define $$\mathcal{V}:= \{C \in \F : \mu[A \cap S^{-n} C] \xrightarrow{n \to \infty} \mu[A]\mu[C] \text{ for all }A \in \mathcal{U}  \}\,.$$Note that $\mathcal{U}$ is a $\pi$-system and $\mathcal{V}$ is a $\lambda$-system.  Further, we have shown that $\mathcal{U}\subset \mathcal{V}$; hence, by Dynkin's $\pi$-$\lambda$ Theorem, $\mathcal{F}\subset \mathcal{V}$, implying $\mathcal{F} = \mathcal{V}$.  Now, define 
		
		$$\mathcal{W}:= \{D \in \F : \mu[D \cap S^{-n} C] \xrightarrow{n \to \infty} \mu[D]\mu[C] \text{ for all }C \in \mathcal{F}  \}\,.$$This collection $\mathcal{W}$ is again a $\lambda$-system and---by the previous $\pi$-$\lambda$ argument---contains $\mathcal{U}$.  Therefore we again have $\mathcal{F}\subset \mathcal{W}$ implying \eqref{eq:mixing} holds for all $A,B \in \F$.
	\end{proof}
	
	From here, establishing a {$0$-$1$ law for recurrence} of $\FM^{(\lambda)}$ on augmented Galton-Watson trees follows from the ergodic theorem.

	\begin{lemma}\label{lem:0-1-AGW}
		If $\E_{\AGW}[\FM^{(\lambda)}_{{\bf T}}(\mathrm{recurrent})] > 0$,  then $\E_\AGW[\FM^{(\lambda)}_{{\bf T}}(\mathrm{recurrent})] = 1$.
	\end{lemma}
	\begin{proof}
		By ergodicity, we have that 
		$$\frac{1}{n}\sum_{k = 0}^{n-1} \one\{ \mathrm{recurrent}\}(S^k (\omega)) \xrightarrow{n\to\infty} \E_{\AGW}[\FM^{(\lambda +)}_{{\bf T}}(\mathrm{recurrent})]$$
		almost surely.  Since $\FM^{(\lambda +)}$ dominates the frog model, this limit is positive.  In particular, this means that we can shift so that we yield a configuration that is recurrent for $\FM^{(\lambda +)}$.  Since shifting may only reduce the set of particles that awaken---and the trajectories are unchanged aside from the initial particle---this implies that, for ${\sf AGW}$ almost surely every $T$, infinitely many particles visit some (not necessarily fixed) vertex of $T$ with probability $1$.  Now assume that for such a tree $T$, we have ${\sf FM}^{(\lambda +)}_T(\text{recurrence})<1$.  This would then have to imply that there exists some \emph{fixed} non-root vertex $v$ in $T$ for which ${\sf FM}^{(\lambda +)}_T(v\ \text{is hit i.o. but root is not})>0$,
		{\RED where we write i.o.\ for \emph{infinitely often}}
		(since the probability of any single frog hitting the root infinitely often is $0$ {due to the almost-sure transience of random walks on supercritical Galton-Watson trees}, we can take the term `i.o.' to mean being hit by infinitely many \emph{distinct} frogs).  
		
		Now let $N$ be some positive integer and, for each $n\geq 0$, let $A_n$ represent the event that at least $n+1$ distinct frogs hit $v$ after time $N$ and, among the first $n$ of these, none go on to hit the root afterwards.  If, in addition, we let $p(v_1,v_2)$ denote the probability that simple random walk begun at $v_1$ ever hits $v_2$, then from here we observe that for each $n$ we have 
		$${\sf FM}^{(\lambda +)}_T(A_n)={\sf FM}^{(\lambda +)}_T(A_0)\prod_{i=1}^n {\sf FM}^{(\lambda +)}_T(A_i|A_{i-1})\leq\big(1-p(v,{\bf 0})\big)^n.$$
		Noting that this last expression goes to $0$ as $n\to\infty$, and then allowing $N$ to go to infinity, we see that we cannot in fact have ${\sf FM}^{(\lambda +)}_T(v\ \text{is hit i.o. but root is not})>0$.  This contradiction then establishes that if $\E_{\AGW}[\FM^{(\lambda)}_{{\bf T}}(\mathrm{recurrent})] > 0$, then it must follow that $\E_{\AGW}[\FM^{(\lambda +)}_{{\bf T}}(\mathrm{recurrent})]=1$.  Conditioning on the event that the number of sleeping frogs placed at the root is $0$ completes the proof.
	\end{proof}
	
	\subsection{Connecting $\AGW$ to $\GW$}
	
	In order to prove Theorem \ref{th:0-1-GW} using Lemma \ref{lem:0-1-AGW}, we will need to show two implications: Namely, that $\E_\GW[\FM^{(\lambda)}_{{\bf T}}(\mathrm{recurrent})] > 0$ implies $\E_\AGW[\FM^{(\lambda)}_{{\bf T}}(\mathrm{recurrent})] > 0$, and that $\E_\AGW[\FM^{(\lambda)}_{{\bf T}}(\mathrm{recurrent})] = 1$ implies \\ \noindent $\E_\GW[\FM^{(\lambda)}_{{\bf T}}(\mathrm{recurrent})] = 1$.  We begin with the former:  
	
	\begin{lemma}\label{lem:AGW-to-GW}
		If $\E_\GW[\FM^{(\lambda)}_{{\bf T}}(\mathrm{recurrent})] > 0$ then $\E_\AGW[\FM^{(\lambda)}_{{\bf T}}(\mathrm{recurrent})] > 0\,.$
	\end{lemma}
	
	\begin{proof}
		We will consider a model that is dominated by $\AGW \times \FM^{(\lambda)}$.  To start, generate a copy of $Z+1$ and call its value $k$.  Generate $k$-many Galton-Watson trees $T^{(1)},\ldots, T^{(k)}$ and place i.i.d.~$\Poiss(\lambda)$ inactive particles at each non-root vertex.  Label the roots of these trees $v_1,\ldots, v_k$ and connect the $Z+1$-many roots to another vertex $\rtt$, which will be taken to be the root of this larger tree; place a single active particle at $\rtt$.  {\RED Since each $v_j$ was the root of a $T^{(j)}$, note that there are no sleeping frogs at any of the $v_j$.} The broad idea is that we will break the tree up into $k+1$ pieces: the set $S:= \{\rtt, v_1,\ldots,v_k\}$ together with $T^{(j)} \setminus \{v_j\}$ for each $j \in \{1,\ldots,k\}$.   Only one of these $k+1$ sets will have particles moving at any given time.  The frog model rule that inactive particles are activated when touched by active particles will still be in effect, however, since no two distinct pieces of the $k+1$ parts that we've separated $T$ into are permitted to have particles in motion simultaneously, the designation ``active" no longer implies a particle is necessarily in the process of moving. 
		
		At time $t=0$ the active particle initially positioned at $\rtt \in S$ begins performing a simple random walk, continuing until it moves into one of the sets $T^{(j)} \setminus \{v_j\}$.  Upon entering this set, this particle continues its random walk, activating sleeping particles along the way, which in turn perform simple random walks activating the sleeping particles that they encounter, and so on (i.e. the normal frog model dynamics apply inside of $T^{(j)} \setminus \{v_j\}$).  This persists until a particle arrives at $v_j$ (which may never happen).  
		When one of these active particles arrives at $v_j$, all other active particles in $T^{(j)} \setminus \{v_j\}$ pause their walks
		and the particle that hit $v_j$ performs its simple random walk in $S$ until exiting, i.e. until entering into another $T^{(i)} \setminus \{v_i\}$; if two particles arrive simultaneously, we break ties arbitrarily.  Now active particles in $T^{(i)} \setminus \{v_i\}$ evolve until a particle arrives at $v_i$, and so on (note that if $i=j$, then all particles in $T^{(j)} \setminus \{v_j\}$ that are already active have their walks resume upon entry of this particle into $T^{(j)} \setminus \{v_j\}$).
		
		The key feature of this model is that, when looking at a single one of the trees $T^{(j)}$, it is simply the frog model stopped---and possibly later restarted---when a particle hits the root of $T^{(j)}$.  This is because whenever a particle exits $T^{(j)}$, evolution inside $T^{(j)}$ stops until a particle enters $T^{(j)} \setminus \{v_j\}$.  Note further that, due to the time independence property of the frog model, the model we have described is simply the frog model on augmented Galton-Watson trees where we possibly ignore the trajectories of many frogs (including those initially positioned at the children of the root). 
		In particular, the probability of recurrence for this model is a lower bound for the
		probability of recurrence of $\AGW\times \FM^{(\lambda)}$.
		The event of recurrence 
		must occur provided on each $T^{(j)}$ 
		the frog model there---i.e.\ the particles and trajectories assigned to them in the larger model---is recurrent.  Since by assumption each $T^{(j)}$ has a positive probability of this occurring and these events are independent for different $j$, we have that there is a positive probability of recurrence and non-extinction of this model, and thus of $\AGW\times \FM^{(\lambda)}$.
	\end{proof}
	
	\medskip
	We now establish the second implication needed to complete the proof of Theorem \ref{th:0-1-GW}.
	
	\begin{lemma}\label{agwasrigwasr}
		If  $\E_\AGW[\FM^{(\lambda)}_{{\bf T}}(\mathrm{recurrent})]=1$, then $\E_\GW[\FM^{(\lambda)}_{{\bf T}}(\mathrm{recurrent})]=1$.
	\end{lemma}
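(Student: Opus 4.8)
The plan is to prove the contrapositive: assuming $\E_\GW[\FM^{(\lambda)}_{{\bf T}}(\mathrm{recurrent})]<1$, I would exhibit a positive-probability event on which the frog model on an augmented Galton--Watson tree is transient. The guiding picture is the standard description of $\AGW$: an augmented Galton--Watson tree is a Galton--Watson tree ${\bf T}$ (root $\rtt$) together with an \emph{independent} Galton--Watson tree ${\bf T}_*$ grafted onto $\rtt$ by a new edge, so that $\rtt$ acquires one extra child $v_*$ (the non-extinction conditioning only costs a routine bookkeeping step at the end, since extinction forces transience). The point is that if the frog model on ${\bf T}$ is transient, then with positive probability its activated frogs never wander into ${\bf T}_*$ at all; in that case the frogs of ${\bf T}_*$ never awaken and the frog model on the augmented tree \emph{literally agrees} with the transient frog model on ${\bf T}$, so the augmented model is transient with positive probability.

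To make this precise I would couple $\FM^{(\lambda)}$ on the augmented tree to $\FM^{(\lambda)}$ on ${\bf T}$ as follows. Run the frog model on ${\bf T}$ with its usual i.i.d.\ simple random walk trajectories; these also serve as the trajectories on the augmented tree, except that each time a frog native to ${\bf T}$ sits at $\rtt$ we independently insert a $\mathrm{Geometric}$ number of excursions into ${\bf T}_*$ before its next step, where the probability of inserting no excursion is $M/(M+1)$ with $M=\deg_{{\bf T}}(\rtt)$; each excursion is a simple random walk from $v_*$ in the augmented tree run until it first returns to $\rtt$, and if some excursion fails to return the frog is absorbed into ${\bf T}_*$ forever. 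Standard bookkeeping shows this reconstructs simple random walk on the augmented tree out of simple random walk on ${\bf T}$, and, adjoining fresh simple random walks for the frogs native to ${\bf T}_*$, that the resulting system has law $\AGW\times\FM^{(\lambda)}$; here one uses that the frog model is timing-independent, so composing trajectories in this way genuinely produces the augmented frog model.

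Now let $\mathcal T$ be the event that $\FM^{(\lambda)}_{{\bf T}}$ is transient, so $\Prob_{\GW\times\FM}(\mathcal T)>0$ by hypothesis. On $\mathcal T$ only finitely many distinct frogs ever hit $\rtt$, and since simple random walk on a supercritical Galton--Watson tree is almost surely transient each of them hits $\rtt$ only finitely often, so the total number $N$ of visits of frogs to $\rtt$ is almost surely finite on $\mathcal T$. Conditionally on the frog model on ${\bf T}$, the probability $G_0$ that no excursion is inserted at any of these $N$ visits equals $(M/(M+1))^{N}$, which is positive on $\mathcal T$; hence $\Prob(G_0\cap\mathcal T)>0$. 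On $G_0\cap\mathcal T$ no native frog enters ${\bf T}_*$, so its frogs stay asleep and the augmented frog model coincides with the transient frog model on ${\bf T}$, giving $\E_\AGW[\FM^{(\lambda)}_{{\bf T}}(\mathrm{recurrent})]<1$, the desired contradiction. The main obstacle is setting up the excursion coupling so that it simultaneously realizes the exact law $\AGW\times\FM^{(\lambda)}$ and reduces, on $G_0$, to the frog model on ${\bf T}$; once that is in place the remaining points (finiteness of $N$, and reconciling the non-extinction conditionings of $\GW$ and $\AGW$) are routine.
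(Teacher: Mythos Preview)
Your argument is correct and complete in its essential points; the excursion decomposition of simple random walk on the augmented tree is standard, and your verification that on $G_0$ the augmented frog model literally coincides with the frog model on ${\bf T}$ goes through as you describe (the key point being that the set of awakened frogs is the minimal closed set under the trajectory data, and on $G_0$ the trajectory data of ${\bf T}$-awakened frogs is unchanged). One small slip: extinction forces \emph{recurrence}, not transience, since on a finite tree simple random walk is recurrent and the root is visited infinitely often. This does not damage the argument---on the contrary, it means the transience event $\mathcal T$ is already contained in the non-extinction event, so passing between conditioned and unconditioned measures is indeed routine---but your stated reason is backwards.

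Your route differs from the paper's. The paper argues structurally: it first shows that $\FM_T(\mathrm{recurrent})<1$ is equivalent to the existence of a child $v$ of the root such that, conditional on the initial frog's first step being to $v$, no particle from $T(v)$ returns to the root with positive probability; it then observes that this property of $T$ is preserved when one grafts an additional independent $\GW$-subtree onto the root, giving $\AGW(D)\geq\GW(D)$ for the set $D$ of ``bad'' trees. Your approach instead builds an explicit coupling of the two frog models and exhibits a concrete positive-probability event on which they agree. The paper's argument is shorter and avoids constructing the coupling, but it leans on a somewhat implicit claim that the defining property of $D$ depends only on the subtree $T(v)$; your coupling argument is more hands-on but makes every step explicit and yields the slightly stronger conclusion that the augmented model is transient whenever the $\GW$-model is, realization by realization on $G_0$.
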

	
	\begin{proof}
		The proof is by a so-called ``local modification'' argument.  
		If $T$ is a tree for which $\FM^{(\lambda)}_T(\mathrm{recurrent})<1$, then there must be a finite set of non-root vertices $v_1,\dots,v_j\in T$ such that, with positive probability, no particles from outside the set $v_1,\dots,v_j\in T$ ever {\RED visit} the root.  Since there is positive probability that no sleeping frogs reside at any of the vertices $v_1,\dots,v_j\in T$, this then means that $\FM^{(\lambda)}_T(\text{no {\RED initially sleeping} particles {\RED visit the} root})>0$, which implies that there exists at least one vertex $v\in T_1$ such that
		$$\FM^{(\lambda)}_T(\text{no particles from}\ T(v)\ \text{hit root}|\text{particle starting at root hits}\ v\ \text{on 1st step})>0.$$  
		Now let $D$ denote the set of all $T$ that have such a vertex $v\in T_1$.  Since ${\sf AGW}$ only differs from ${\sf GW}$ on account of the root being assigned an extra child (which is itself the root of a subtree with distribution ${\sf GW}$), this then implies that ${\sf AGW}(D)\geq{\sf GW}(D)$.  Since we've established that a tree $T$ satisfies $\FM^{(\lambda)}_T(\mathrm{recurrent})<1$ if and only if $T\in D$, we can then conclude that $\AGW\Big(\FM^{(\lambda)}_{{\bf T}}(\mathrm{recurrent})<1\Big)\geq\GW\Big(\FM^{(\lambda)}_{{\bf T}}(\mathrm{recurrent})<1\Big)$, which completes the proof of the lemma.
	\end{proof}

	\begin{proof}[Proof of Theorem \ref{th:0-1-GW}] 
		Let $\lambda > 0$ so that $\E_\GW[\FM^{(\lambda)}_{{\bf T}}(\mathrm{recurrent})] > 0$.  Then by Lemma \ref{lem:AGW-to-GW}, it must be that $\E_\AGW[\FM^{(\lambda)}_{{\bf T}}(\mathrm{recurrent})] > 0$.  Applying Lemma \ref{lem:0-1-AGW} then shows $\E_\AGW[\FM^{(\lambda)}_{{\bf T}}(\mathrm{recurrent})] = 1$.  Lemma \ref{agwasrigwasr} completes the proof.
	\end{proof}

	\section{Transience}
	\label{sec:transience}
	In this brief section, we establish a basic transience result that applies for all rooted trees without leaves or pipes, {\RED i.e.\ all rooted trees whose non-root vertices have degree at least $3$}.  Specifically, we obtain a lower bound on the value $\lambda_1(T):=\text{sup}\{\lambda:{\sf FM}^{(\lambda)}_T(\text{transience})=1\}$  with respect to the minimum degree of $T$, which is the direct analogue of the transience result achieved by Hoffman, Johnson, and Junge in \cite{HJJ1} for regular trees.
	
	\medskip
	\begin{remark}
		Note that the reason we refer to the quantity $\lambda_1(T)$ here (rather than $\lambda_c(T)$) is because we are working to achieve a result that applies for \emph{every} rooted tree without leaves or pipes, rather than just almost surely every tree generated by some offspring distribution.  Hence, we cannot assume that the {\BLUE $0$-$1$} law obtained in the previous section necessarily holds.  Indeed, a counterexample is presented in Lemma \ref{lem:nontrivial-intermediate}.
	\end{remark}
	
	\noindent
	\subsection{Using minimal degree to bound $\lambda_1$}\label{sec:minimal-degree} We now present a result that relates $\lambda_1(T)$, for a tree $T$ without leaves or pipes, to the minimum degree for vertices in $T$.  While we are largely interested in the critical value of the Poisson mean $\lambda_1(T)$, the proof applies to any nonnegative integer valued random variable with the specified mean, and thus we state the theorem in that generality.  The statement, as well as the proof, mirrors Proposition 15 from \cite{HJJ1}, which consists of the analogous result for $n$-ary trees; {\RED we include the details here for the sake of obtaining a good lower bound on $\lambda_c$ in Theorem \ref{theorem:mrecthjg}:}
	
	\begin{theorem} \label{th:transiencemd}
		Let $T$ be a rooted tree for which all vertices have at least $k\geq 2$ children.  Then the frog model on $T$ with i.i.d. $\eta$ frogs per non-root vertex is transient provided ${\bf E}[\eta]<\frac{(k-1)^2}{4k}$.
	\end{theorem}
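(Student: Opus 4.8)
The plan is to couple the frog model with a branching random walk (BRW) on $T$ that stochastically dominates it with respect to the number of particles hitting the root, and then show that the BRW is transient via a weight-function / supermartingale argument. Concretely, the first step is the standard observation (used already in \cite{HJJ1,HJJ2}) that the frog model is dominated by a \emph{non-backtracking} branching random walk: each awakened particle, rather than possibly re-activating frogs it already woke, is replaced by a branching process in which a particle at a vertex $v$ with parent $\Par{v}$ produces, at each time step, independent copies that each either step toward a child or toward $\Par{v}$ according to SRW transition probabilities, and every time a particle first reaches a vertex it deposits an independent $\eta$-distributed batch of new particles there. Since $T$ has minimum degree $\ge k$, the SRW at any non-root vertex moves toward the parent with probability at most $\tfrac{1}{k+1}$ (the worst case is exactly $k$ children, giving probability $\tfrac{1}{k}$ among the $k$ children versus one parent — so probability toward parent is $\tfrac{1}{k+1}$; I will use whichever bound the computation below actually needs, being careful about root-degree conventions). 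Transience of the dominating BRW implies transience of the frog model, so it suffices to analyze the BRW.

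Next I would set up the supermartingale. Assign to a particle located at a vertex $v$ at graph distance $\ell$ from the root the weight $s^{\ell}$ for a parameter $s > 1$ to be chosen, and let $W_t$ be the sum of $s^{\ell(\text{particle})}$ over all particles alive at time $t$, \emph{plus} a term accounting for the yet-undeposited expected contribution of frogs sitting at vertices not yet visited — i.e.\ weight each unvisited vertex at level $\ell$ by $\E[\eta]\, s^{\ell} = \lambda s^{\ell}$ (writing $\lambda := \E[\eta]$). The point of including the ``potential'' term is that activating a fresh vertex is then weight-neutral in expectation, so the only dynamics that matter are the random-walk steps. For a single particle at level $\ell \ge 1$, one step changes its weight in expectation by a factor governed by $p\, s^{\ell-1} + (1-p) s^{\ell+1}$ where $p$ is the (upper bound on the) probability of stepping toward the root; dividing by $s^\ell$, the per-step expected multiplicative change is $p s^{-1} + (1-p) s$. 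The task is to choose $s>1$ making this $\le 1$, i.e.\ $p/s + (1-p)s \le 1$, which rearranges to $(1-p)s^2 - s + p \le 0$, i.e.\ $s \in [\,1,\ \tfrac{p}{1-p}\,]$ when $p < 1/2$ — and for this interval to be nondegenerate and allow a genuine contraction one needs, after also accounting for the branching factor, the inequality $\E[\eta] < \frac{(k-1)^2}{4k}$. I would carry out the optimization of $s$ explicitly: with $p = \tfrac{1}{k+1}$ (equivalently working with the relevant out-degree ratio), the branching at each step multiplies the number of particles, so the true condition is that the \emph{total} expected weight change factor $\big(\text{branching rate}\big)\cdot\big(p/s + (1-p)s\big) \le 1$, and optimizing over $s$ yields the threshold $\tfrac{(k-1)^2}{4k}$ for $\lambda$; this is exactly the computation in Proposition 15 of \cite{HJJ1}, which I would follow.

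With $(W_t)$ a nonnegative supermartingale, it converges a.s.\ to a finite limit, and in particular $W_t$ is a.s.\ bounded. Since the root has weight $s^0 = 1$, every particle that ever hits the root contributes a distinguishable unit of weight at the moment of its arrival; a standard argument (e.g.\ stopping the supermartingale at the successive hitting times of the root, or using the fact that $\liminf W_t < \infty$ forces only finitely many level-$0$ arrivals because each arrival could be routed to give a fixed positive weight contribution that does not get cancelled) shows that only finitely many particles hit the root, i.e.\ the BRW — hence the frog model — is transient a.s. The one genuinely delicate point, which I would treat carefully rather than wave at, is making precise that infinitely many root-visits would force $W_t$ (or a suitable variant) to be unbounded: because a particle at the root can subsequently wander back out, its weight-$1$ contribution is not permanent, so the clean way is to track, for each successive root visit, the event that \emph{that} particle immediately takes a step and never returns — these have probability bounded below, are conditionally independent across visits, and each contributes to an auxiliary submartingale-type count that must stay finite. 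This bookkeeping — essentially the same as in \cite{HJJ1} — is where I expect the only real friction; the rest is the $s$-optimization and the domination setup, both routine.
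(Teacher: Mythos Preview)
Your overall framework matches the paper's---dominate the frog model by a branching random walk in which a particle spawns $\eta$ offspring each time it steps \emph{away} from the root, then control an exponential weight function---but several of the details you propose are wrong in ways that matter.

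The most concrete error is the direction of the weight parameter. You want $s>1$, but your own quadratic $(1-p)s^2-s+p\le 0$ has roots $s=1$ and $s=p/(1-p)$; with $p=\tfrac{1}{k+1}<\tfrac12$ this gives $p/(1-p)=\tfrac1k<1$, so the admissible interval is $[\tfrac1k,1]$, not $[1,\tfrac{p}{1-p}]$ (which is empty). For $s>1$ one has $p/s+(1-p)s>1$ and no supermartingale is possible. The paper takes $\alpha=\big((\E[\eta]+1)k\big)^{-1/2}<1$. Relatedly, the branching does not factor out as ``(branching rate)$\times$(walk factor)'': offspring appear only on steps away from the root, so the one-step expected multiplier at a vertex with $j\ge k$ children is $\tfrac{1}{j+1}\alpha^{-1}+\tfrac{j}{j+1}(\E[\eta]+1)\alpha$, which the paper then shows is at most $m:=\tfrac{2\alpha^{-1}}{k+1}$, uniformly in $j\ge k$. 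Your ``potential term'' device, pre-weighting every unvisited vertex by $\E[\eta]\,s^{\ell}$, is a different idea and is ill-defined here: the hypothesis only bounds degrees from below, so $\sum_{v}s^{|v|}$ can diverge for every $s>0$.

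Finally, the endgame is much simpler than you anticipate once $\alpha<1$ is in place. From $\E[W_{n+1}\mid W_n]\le m W_n$ one gets that $W_n/m^n$ is a nonnegative supermartingale, hence converges a.s.; since the condition $\E[\eta]<\tfrac{(k-1)^2}{4k}$ is exactly $m<1$, this forces $W_n\to 0$ a.s. But any particle at the root contributes weight $\alpha^0=1$ to $W_n$, so there can be only finitely many times with a particle at the root---no auxiliary stopping-time or submartingale bookkeeping is needed.
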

	
	\begin{proof} 
		We begin by defining the branching random walk model on $T$ that starts with a single particle positioned at the root at time $0$, and where particles perform independent simple random walks, each one giving birth to $\eta$ additional particles every time it takes a step away from the root.  Letting $Y$ represent the total number of returns to the root for this model, we note that since $Y$ stochastically dominates $V$ (the number of returns to the root for the frog model with $\eta$ frogs per non-root vertex), it will suffice to establish the desired result for the branching model.  Adopting the notation from the proof of Proposition 15 in \cite{HJJ1}, we let $F_n$ represent the set of active particles at time $n$, and for every particle $f\in F_n$, we denote its distance from the root as $|f|$.  Next we define the weight function $$W_n=\sum_{f\in F_n}\alpha^{|f|},$$where $\alpha=\big(({\bf E}[\eta]+1)k\big)^{-1/2}$.  Now for any frog $f$ positioned at time $n$ at a non-root vertex with $j\geq k$ children, the expected contribution that $f$, along with all of its progeny that are born at time $n+1$, makes to $W_{n+1}$ is equal to\begin{align}\label{cewnp1nr}\frac{1}{j+1}\alpha^{|f|-1}+\frac{j}{j+1}\big({\bf E}[\eta]+1\big)\alpha^{|f|+1}&=\alpha^{|f|}\bigg(\frac{1}{j+1}\alpha^{-1}+\frac{j}{j+1}\big({\bf E}[\eta]+1\big)\alpha\bigg)\\&=\alpha^{|f|}\bigg(\frac{\alpha^{-1}}{k+1}\cdot\frac{(k+j)(k+1)}{(j+1)k}\bigg)\nonumber\\&=\alpha^{|f|}\cdot\frac{\alpha^{-1}}{k+1}\bigg(2-\frac{(k-1)(j-k)}{k(j+1)}\bigg)\nonumber\\&\leq\alpha^{|f|}\cdot\frac{2\alpha^{-1}}{k+1}.\nonumber
		\end{align}
		Likewise, in the case where $f$ is at the root, the expected contribution $f$ and its progeny make to $W_{n+1}$ is 
		$$\alpha\big({\bf E}[\eta]+1\big)=\frac{\alpha^{-1}}{k}\leq\alpha^{|f|}\cdot\frac{2\alpha^{-1}}{k+1}.$$
		Hence, combining this with \eqref{cewnp1nr} and summing over all $f\in F_n$ we get 
		\begin{equation}\label{cebwnp1wrtwn}{\bf E}[W_{n+1}|W_n]\leq\frac{2\alpha^{-1}}{k+1}W_n.\end{equation}
		Now defining $m:=\frac{2\alpha^{-1}}{k+1}$ and noting that \eqref{cebwnp1wrtwn} implies that $\frac{W_n}{m^n}$ is a nonnegative supermartingale, we see that $\frac{W_n}{m^n}$ must be almost surely convergent.  Combining this with the fact that ${\bf E}[\eta]<\frac{(k-1)^2}{4k}\iff m<1$, we can now conclude that $W_n\longrightarrow 0$ a.s., thus establishing transience and completing the proof.
	\end{proof}

	\section{Basic tools for random walk and Galton-Watson trees} \label{sec:tools}
	
	Before proceeding to the proof of recurrence, we introduce some basic tools concerning random walk on trees and comparisons between different probability measures on trees.  These will form the toolbox throughout the proof of recurrence.  Many of these statements have technical and somewhat standard proofs, which will be deferred to an appropriate appendix.  
	
	\subsection{Properties of the harmonic measure and simple random walk}  \label{sec:SRW}
	We begin with a basic calculation that is the primary use of our assumption that $Z \geq 2$.
	\begin{lemma}\label{lem:1/2}
		Let $T$ be an infinite tree with root $\bf 0$ so that each vertex is of degree at least $3$.  Then for all $v \in T_1$ the probability that a simple random walk on $T$ starting at $\bf 0$ ever hits $v$ is at most $1/2$.  We thus have  $\HARM_{T}(v) \leq 1/2$.
	\end{lemma}
	\begin{proof}
		Let $X_n$ be the distance from the random walker at time $n$ to $v$.  Consider now a random walk $\{Y_n\}$ with drift on $\bf Z$ started at $1$ where we take right-ward steps with probability $2/3$ and left-ward steps with probability $1/3$.  Up until $Y_n = 0$, we may couple the random walks $(X_n,Y_n)$ so that $X_n$ is larger than $Y_n$.  Computing directly that $\P(Y_n >0 \text{ for all }n) = 1/2$ completes the proof.
	\end{proof}

	We will want to compare the probability a simple random walk visits the root of a tree to its harmonic measure.  As such, for a rooted tree $T$ and a vertex $u \in T$ define $p_0(u)$ be the probability that a loop-erased random walk starting at $u$ ever hits the root.  
	
	\begin{lemma}\label{lem:crphmjg}
		There is a universal constant $C > 0$ so that for any rooted tree $T$ with minimum degree $m\geq 3$ and every $v \in T_2$ and every $u\in T(v)$, we have $$p_0(u)\geq C\cdot\frac{{\sf HARM}_{T(v)}(u)}{|T_1(u)|\cdot|T_1(\Par{v})|}.$$
	\end{lemma}
	
	Finally, we will want to compare the harmonic measure to simple random walk without loop erasure.
	
	\begin{lemma}\label{lem:lljgyhi}
		In a simple random walk on a rooted tree $T$ and a vertex $v \in T_n$, let $f(v)$ be the probability that $v$ is the first level $n$ vertex visited by the random walk.  There exists a universal constant $C\in(1,\infty)$ such that, for any infinite rooted tree $T$ where all vertices have at least two children, and any non-root vertex $v$ of $T$ we have \begin{equation}\label{fboundbh}\frac{1}{C}\cdot{\sf HARM}_T(v)\leq f(v)\leq C\cdot{\sf HARM}_T(v).
		\end{equation}
	\end{lemma}
	
	Both Lemma \ref{lem:crphmjg} and \ref{lem:lljgyhi} are proved in Appendix \ref{app:hitting}.
	
	\subsection{Cutting out subtrees with a large first generation} \label{sec:cutting}
	Since we will use the harmonic measure as a weighing for counting the number of activated vertices, we will show that it is rare for the harmonic measure to have a big contribution from vertices with many children.  
	
	\begin{lemma}\label{lem:base}
		There are universal constants $C,c > 0$ so that for all $N \geq C \mu$ and $n$ we have $${\sf GW}\bigg(\sum_{v'\in{\bf T}_n}{\sf HARM}_{{\bf T}}(v')\cdot\mathbf{1}_{|{\bf T}_1(v')|\geq N}\geq\frac{1}{4}\bigg)\leq \exp\left(-c 2^n\right)\,.$$
	\end{lemma}
	
	\begin{proof}
		
		Let $T$ be a rooted tree for which all vertices have at least two children.  If $v$ is a level $n$ vertex in $T$ and we let $v_0,v_1,\dots,v_n=v$ be the path going from the root to $v$, then since 
		$${\sf HARM}_T(v)=\prod_{j=0}^{n-1}{\sf HARM}_{T(v_j)}(v_{j+1}),$$
		Lemma \ref{lem:1/2} implies that ${\sf HARM}_T(v)\leq 2^{-n}$.  Now letting $u_1,\dots,u_k$ represent an enumeration of the level $n$ vertices in $T$, and letting $f(u_j)$ represent the probability that $u_j$ is the first level $n$ vertex hit by simple random walk on $T$ beginning at the root, we see that the above exponential bound combined with Lemma \ref{lem:lljgyhi} implies that $f(u_j)\leq C 2^{-n}$ for each $j$.
		
		To obtain the statement bound in the Lemma we will bound an exponential moment.  Observe that if we condition on the first $n$ levels of the random tree ${\bf T}$ matching those of $T$ (we write this as ${\bf T}_n\approx T_n$), then we may use conditional independence to find that for any $t > 0$ we have 
		\begin{align*}
			\E_{{\sf GW}}&\left[\exp\left(t\sum_{j = 1}^{k} f(u_j)\mathbf{1}_{|{\bf T}_1(u_j)|\geq N}\right)\,\bigg|\,{\bf T}_n\approx T_n\right] \\
			&\qquad=\prod_{j=1}^k \E_{{\sf GW}}\left[\exp\left(tf(u_j)\mathbf{1}_{|{\bf T}_1(u_j)|\geq N}\right) \, \bigg| \, {\bf T}_n \approx T_n\right] \\
			&\qquad=\prod_{j=1}^k\left( 1+\P(Z \geq N)\big(e^{tf(u_j)}-1\big) \right).
		\end{align*}
		
		In addition, since $e^x\leq 1+2x$ for all $ x\in[0,1]$, it follows from the above calculation that for the choice $t = \frac{1}{C}2^n$ and writing $p_N = \P(Z \geq N)$ we have
		\begin{align*}\E_{{\sf GW}}\Big[\exp\left(t\sum_{j=1}^k f(u_j)\mathbf{1}_{|{\bf T}_1(u_j)|\geq N}\right)\Big|{\bf T}_n\approx T_n\Big]&\leq\prod_{j=1}^k\left( 1+2tp_Nf(u_j) \right) \\
			&\leq\prod_{j=1}^k e^{2tp_N f(u_j)}=e^{2tp_N}.
		\end{align*}
		Markov's inequality that then shows that for any $r>0$ we have 
		$${\sf GW}\bigg(\sum_{j=1}^k f(u_j)\one_{|{\bf T}_1(u_j)|\geq N}\geq r\Big|{\bf T}_n\approx T_n\bigg)\leq e^{-(r-2p_N)\frac{1}{C}2^n}.$$
		Now once again applying Lemma \ref{lem:lljgyhi}, while also noting that the expression on the right in the above inequality does not depend on $T_n$, we can conclude that $${\sf GW}\bigg(\sum_{v'\in{\bf T}_n}{\sf HARM}_{{\bf T}}(v')\mathbf{1}_{|{\bf T}_1(v')|\geq N}\geq Cr\bigg)\leq e^{-(r-2p_N)\frac{1}{C}2^n}.$$
		Finally, setting $r=\frac{1}{4C}$ and noting that for $N \geq C \mu$ we have $2p_n \leq r/2$ completes the proof.
	\end{proof}

	In the proof of recurrence we will often require that many levels of the tree satisfy the event in Lemma \ref{lem:base}.  To this end, we introduce the following definitions.
	
	\begin{defn}[Bad tree events]
		For a given $N,m$ and $i$ define the collection of rooted trees $A_i$ by $$A_i = \left\{\sum_{v'\in{T}_i}{\sf HARM}_{{ T}}(v')\mathbf{1}_{|{ T}_1(v')|\geq N}\geq\frac{1}{4} \right\}.$$
		Define $$A = \bigcup_{\frac{m}{2}<i\leq m} A_i\,.$$
	\end{defn}

	\begin{lemma}\label{lem:A-rare}
		There are universal constants $C,c > 0$ so that for all $N \geq C\mu$ and all $m$ we have $$ \GW(A) \leq C e^{-c 2^{m/2}}\,.$$
	\end{lemma}
	\begin{proof}
		Apply Lemma \ref{lem:base} along with a union bound over $i$.  Adjusting the constant $c$ completes the Lemma.
	\end{proof}

	\subsection{Comparisons between measures on trees} \label{sec:comp}
	
	We will need two families of measures on trees that are slightly different from $\GW$ and $\AGW$ but are closely related.  First, sample $\bT$ according to $\AGW$, and let $v_0,v_1,\ldots$ be the vertices of an infinite nonbacktracking path from the root that is sampled according to the harmonic measure ${\sf HARM}_{{\bf T}}$.  Define $\AGW_n$ denote the law of the rooted tree $\bT(v_n)$.  Note that since the vertex $v_n$ is chosen according to the harmonic measure, this has the effect of biasing ${\bf T}(v_n)$ to be larger than $\mathbf{T}$ in some sense.  As such, $\AGW_n$ is distinct from $\GW$, however we will show that the two measures are quite close.

	\begin{lemma}\label{lem:lljgyhiyy}
		There exists a universal constant $C\in(1,\infty)$ such that, if $Z$ is an offspring distribution satisfying $\Prob(Z\geq 2)=1$, then $$\frac{1}{C}\leq\frac{d{\sf AGW}_n}{d{\sf GW}}\leq C\ \ \ {\sf GW}-\text{a.s.}\ \forall\ n\geq 1.$$
	\end{lemma}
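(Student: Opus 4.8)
The plan is to deduce the two‑sided comparison from Lemma~\ref{lem:lljgyhi} by trading the harmonic measure of a level‑$n$ vertex for its \emph{first‑passage} probability $f(v)$ — the probability that $v$ is the first level‑$n$ vertex visited by simple random walk from the root — and then exploiting the fact that, unlike ${\sf HARM}_{{\bf T}}(v)$, the quantity $f(v)$ depends on ${\bf T}$ only through its first $n$ generations, hence is independent of the subtrees hanging below level $n$.

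First I would unwind the definition of ${\sf AGW}_n$: it is the law of ${\bf T}(v_n)$, where $v_0,v_1,\dots$ is the harmonic‑measure ray of ${\bf T}\sim{\sf AGW}$. Since, given ${\bf T}$, the ray passes through a level‑$n$ vertex $v$ with probability ${\sf HARM}_{{\bf T}}(v)$, and on that event ${\bf T}(v_n)={\bf T}(v)$, we get, for every measurable family $A$ of rooted trees,
$${\sf AGW}_n(A)=\E_{{\sf AGW}}\!\left[\,\sum_{v\in{\bf T}_n}{\sf HARM}_{{\bf T}}(v)\,\mathbf{1}\{{\bf T}(v)\in A\}\,\right].$$
Because $\Prob(Z\ge 2)=1$, there is no extinction, so under ${\sf AGW}$ the root has at least three children and every other vertex at least two; hence Lemma~\ref{lem:lljgyhi} applies to ${\bf T}$ and yields $\tfrac1C{\sf HARM}_{{\bf T}}(v)\le f(v)\le C\,{\sf HARM}_{{\bf T}}(v)$ for every $v\in{\bf T}_n$, with $C$ the universal constant of that lemma. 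Substituting,
$$\frac1C\,\E_{{\sf AGW}}\!\left[\sum_{v\in{\bf T}_n}f(v)\,\mathbf{1}\{{\bf T}(v)\in A\}\right]\le{\sf AGW}_n(A)\le C\,\E_{{\sf AGW}}\!\left[\sum_{v\in{\bf T}_n}f(v)\,\mathbf{1}\{{\bf T}(v)\in A\}\right].$$

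The next step is to evaluate the common expectation. Simple random walk from the root reaches level $n$ through exactly one vertex — the walk is transient since the branching number is at least $2$ — so $\sum_{v\in{\bf T}_n}f(v)=1$ almost surely; and up to the first time it reaches level $n$ the walk visits only vertices of depth at most $n$, so $f(v)$ is a function of the truncation ${\bf T}_{\le n}$ of ${\bf T}$ to its first $n$ generations alone. On the other hand, conditionally on ${\bf T}_{\le n}$ the subtrees $\{{\bf T}(v)\}_{v\in{\bf T}_n}$ are i.i.d.\ with law ${\sf GW}$ — true for ${\sf GW}$ by construction, hence also for ${\sf AGW}$, which differs only at the root. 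Therefore, conditioning on ${\bf T}_{\le n}$,
$$\E_{{\sf AGW}}\!\left[\sum_{v\in{\bf T}_n}f(v)\,\mathbf{1}\{{\bf T}(v)\in A\}\,\Big|\,{\bf T}_{\le n}\right]=\sum_{v\in{\bf T}_n}f(v)\cdot{\sf GW}(A)={\sf GW}(A),$$
so the common expectation equals ${\sf GW}(A)$. Plugging back in gives $\tfrac1C{\sf GW}(A)\le{\sf AGW}_n(A)\le C\,{\sf GW}(A)$ for all $A$ and all $n\ge 1$, i.e.\ $\tfrac1C\le\tfrac{d{\sf AGW}_n}{d{\sf GW}}\le C$ ${\sf GW}$‑a.s., which is the claim, with the same universal constant $C$ as in Lemma~\ref{lem:lljgyhi}.

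The genuine content here is Lemma~\ref{lem:lljgyhi}; once that is in hand the argument is short, and the hard part is really just the bookkeeping of which quantities see the whole tree and which see only its truncation — ${\sf HARM}_{{\bf T}}(v)$ and ${\bf T}(v)$ depend on all of ${\bf T}$, whereas $f(v)$ depends only on ${\bf T}_{\le n}$, and it is precisely this that lets the sum collapse to ${\sf GW}(A)$. The only other points needing care are verifying the hypotheses of Lemma~\ref{lem:lljgyhi} (every vertex has at least two children) and that $\sum_{v\in{\bf T}_n}f(v)=1$ (transience of the walk, which follows from branching number at least $2$).
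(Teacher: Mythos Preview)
Your proof is correct and follows essentially the same approach as the paper: both express ${\sf AGW}_n(A)$ as an ${\sf AGW}$-average of $\sum_{v\in{\bf T}_n}{\sf HARM}_{{\bf T}}(v)\mathbf{1}_{{\bf T}(v)\in A}$, invoke Lemma~\ref{lem:lljgyhi} to replace ${\sf HARM}_{{\bf T}}(v)$ by $f(v)$ up to the universal constant $C$, and then identify the resulting expectation with ${\sf GW}(A)$ via the fact that the subtree at the first-hit level-$n$ vertex has law ${\sf GW}$. The paper states this last fact directly (``${\bf T}(\hat v)$ has distribution ${\sf GW}$''), whereas you unpack it by conditioning on ${\bf T}_{\le n}$ and using that $f(v)$ is ${\bf T}_{\le n}$-measurable while the subtrees below level $n$ are i.i.d.\ ${\sf GW}$; these are the same argument.
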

	
	We will also need a law on random trees that have a given first generation and then shift by the harmonic measure.

	\begin{lemma}\label{lem:GW1}
		For each $j \in \mathbf{N}$ define the measure $\GW_1^{(j)}$ as follows: let $\mathbf{T}$ be sampled from $\GW$ conditioned on $|\mathbf{T}_1| = j$.  Choose $v \in \mathbf{T}_1$ according to the harmonic measure on $\mathbf{T}$.  Then set $\GW_1^{(j)}$ to be the law of $\mathbf{T}(v)$.   Then $$\frac{1}{2} \leq \frac{d \GW_1^{(j)}}{d\GW} \leq 2\,.$$
	\end{lemma}
	
	Lemmas \ref{lem:lljgyhiyy} and \ref{lem:GW1} are proved in Appendix \ref{app:comp}.
	
	\section{Recurrence}  \label{sec:proofmain}
	The purpose of this Section is to show the existence of a recurrent regime for Galton-Watson trees, which we state explicitly as Theorem \ref{theorem:mrecthjg}.  We begin by introducing two variants of the frog model: the truncated frog model and the augmented truncated frog model; the purpose of the truncated frog model is for a direct comparison to the frog model (Lemma \ref{lem:truncated-frog}), while the purpose of the augmented truncated frog model is to understand the behavior of the truncated frog model when shifted to a vertex $v$ (Lemma \ref{lem:augmented}).  The main technical engine in our proof is Proposition \ref{pr:iteratejg}, which we prove in Section \ref{sec:proof-prop}.
	

	\subsection{Two variants: The truncated frog model and its augmented version} \label{sec:truncated}
	
	In this section we define two variants on the frog model.  The first is the truncated frog model, which we will directly compare the frog model to.  The dynamics of the \emph{truncated frog model} are as follows:
	
	\begin{defn}[Truncated Frog Model Dynamics]  Given a rooted tree $T$ and a parameter $\lambda \geq 0$ the truncated frog model is defined as follows.
		\begin{enumerate}
			\item Like the ordinary frog model, this model begins with a single active particle at the root, and i.i.d.\ $\text{Poiss}(\lambda)$ inactive particles at all non-root vertices.
			
			\item An inactive particle is activated when the vertex at which it resides is landed on by an active particle.  Upon activation, particles perform independent loop-erased random walks, which terminate upon hitting the root.
			
			\item In addition, any time an active particle takes a step away from the root and lands on a vertex which has already been landed on by at least one other active particle, the particle is eliminated.  If more than one particle simultaneously land on a vertex which had not previously been landed on by an active particle, all but one of these particles are eliminated.
		\end{enumerate}
	\end{defn}

	\noindent

	We first note that there is a natural coupling between the frog model and the truncated frog model so that the number of particles that visit the root in the frog model is at least as large as the number in the truncated frog model.  
	
	\begin{lemma}\label{lem:truncated-frog}
		Suppose $\lambda$ is so that $\GW$-a.s.\ the truncated frog model with $\Poiss(\lambda)$ sleeping frogs at each non-vertex root is recurrent.  Then the same holds for the frog model $\GW$-a.s.
	\end{lemma}
	\begin{proof}
		We  provide a coupling between the frog model and the truncated frog model.
		Let $T^{(1)}, T^{(2)}$ be copies of a tree $T$ sampled from $\GW$.  On $T^{(1)}$ we place an active particle at the root, we position i.i.d. $\text{Poiss}(\lambda)$ sleeping particles at all non-root vertices, and then run the ordinary frog model.  Now on $T^{(2)}$, we let each non-root vertex begin with the same number of sleeping particles as the corresponding vertex in $T^{(1)}$, and assign to each particle in $T^{(2)}$ a partner in $T^{(1)}$ originating at the same vertex.  We now define a copy of the truncated model on $T^{(2)}$ by having each particle, if activated, proceed along the path obtained by eliminating all loops from the path taken by its partner in $T^{(1)}$ (activating all sleeping particles it encounters along the way), until the particle in $T^{(2)}$ either hits the root, or travels from a parent vertex to a child that has already been landed on by another particle, at which point it is eliminated.  Letting $X_j$ represent the total number of particles that visit the root in the model defined on $T^{(j)}$, we see that because the trajectory of each activated particle in $T^{(2)}$ is a subset of the trajectory of its partner in $T^{(1)}$, and because all activated particles in $T^{(2)}$ have activated partners in $T^{(1)}$, this implies that $X_1\geq X_2$.  In particular, if $X_2 = +\infty$ then $X_1 = +\infty$. 
	\end{proof}

	For any $T,\lambda$ combination (where $T$ is a rooted tree without leaves or pipes) we denote the law of the truncated frog model on $T$ with $\text{Poiss}(\lambda)$ sleeping frogs per non-root vertex as ${\sf TFM}^{(\lambda)}_T$.  The elements comprising the space on which this measure is defined, denoted as ${\sf PathsParticlesTrajectories}_T$, will consist of the following information: A single non-backtracking trajectory starting at the root, a non-negative integer $n_v$ for each non-root vertex $v$ that refers to the number of sleeping frogs initially located there, $n_v$ non-backtracking paths for each non-root vertex $v$, and finally an element of $[0,1]^{\mathbb{N}}$ associated with each non-backtracking path that allows us to break ties (this is needed on account of the last of the three conditions used to define the truncated frog model in the prior subsection).  

	One difficulty of working with the truncated frog model is that the root is distinguished from other vertices.  In particular, if we consider the truncated frog model on $T$ and some vertex $v$, we will want a model to understand how the model looks on the subtree $T(v)$ conditioned on $v$ being activated.  As such, we will primarily work with a slightly altered version of the frog model that we call the \emph{augmented truncated frog model} defined as follows.   
	
	\begin{defn}[Augmented Truncated Frog Model Dynamics] Given a rooted tree $T$, $\lambda \geq 0$, attach  a leaf vertex $v_{\ell}$ to the root of $T$ in order to generate the tree $T^+$.
		\begin{enumerate}
			\item Place i.i.d.\ $\Poiss(\lambda)$ inactive particles at \emph{all} vertices aside from $v_{\ell}$ and additionally place a single active particle at the root.  
			\item An inactive particle is activated when the vertex at which it resides is landed on by an active particle.  Upon activation, particles perform independent loop-erased random walks.  Each time such a walk hits $v_{\ell}$, it terminates.  The original particle placed at the root performs an ordinary loop-erased random walk on $T$, and so never steps to $v_\ell$.
			
			\item As is the case in the truncated frog model, any time an active particle stepping away from the root lands on a (non-leaf) vertex that has already been landed on by another active particle, this particle dies.  If multiple active particles land on a previously unvisited vertex simultaneously, then all but one (chosen uniformly at random) die.
		\end{enumerate}
		
	\end{defn}

	The intuition behind the augmented truncated frog model is that for a non-root vertex $v$, once $v$ is activated there is some number of inactive particles at $v$; this accounts for the additional particles placed at the root.  Additionally, loop erased random walk starting within $T(v)$ in the larger tree $v$ will either escape through $\Par{v}$ or continue downward in $T(v)$; the purpose of adding the leaf $v_\ell$ is to account for the particles that escape through $\Par{v}$. 
	
	The law induced by this model will be denoted as $\ATFM^{(\lambda)}_{T}$, and the space on which it is defined, which includes all of the information associated with ${\sf PathsParticlesTrajectories}_T$, on top of the information pertaining to the additional $n_v$ active particles starting at the root, will be denoted as ${\sf PathsParticlesTrajectories}^*_T$.  As the $n_v$ particles placed at the root are performing loop-erased random walk, each one either escapes downward in the tree $T(v)$ or is terminated at $v_\ell$.  
	
	The relationship between these two models is summarized in the following basic fact.  
	\begin{lemma}\label{lem:augmented}
		For any rooted tree $T$ so that each vertex has at least two children, $v \in T_1$ and $\lambda > 0$ we have $$\TFM_T^{(\lambda)}(v \text{ is activated}\,|\,\Par{v}\text{ is activated}) \geq  \ATFM_{T(\Par{v})}^{(\lambda)}(v \text{ is activated})\,.$$
	\end{lemma}
	\begin{proof}
		Let $p = 1 - p(\Par{v},v)$ where $p(\Par{v},v)$ denotes the probability that a simple random walk starting at $\Par{v}$ ever hits $v$.  If we introduce the slight variant $\ATFM^{(\lambda,p)}$ where rather than terminating particles when they hit $v_\ell$ we terminate them with probability $p$, then we see that on the tree $T(\Par{v})$, the measure $\TFM_{T(\Par{v})}^{(\lambda)}$ and $\ATFM_{T(\Par{v})}^{(\lambda,p)}$ are identical.  We also note $$\TFM_T^{(\lambda)}(v \text{ is activated}\,|\,\Par{v}\text{ is activated}) = \TFM_{T(\Par{v})}^{(\lambda)}(v \text{ is activated})\,.$$  
		By terminating all particles that hit $v_\ell$ rather than thinning out by probability $p$, we may only decrease the set of vertices activated, completing the proof. 
	\end{proof}

	\subsection{Proof of recurrence}\label{sec:jgsm3}
	
	In this section we present our main recurrence result.  The precise result consists of the following theorem.
	
	\medskip
	\begin{theorem}\label{theorem:mrrjg}
		Let $Z$ be an offspring distribution satisfying $\Prob(Z\geq 2)=1$ and $\E[Z^{4+\epsilon}]<\infty$ (for some $\epsilon>0$), and let ${\sf GW}$ be the measure on Galton-Watson trees generated by $Z$.  Then there exists a constant $\lambda_0\in(0,\infty)$ such that, for every $\lambda>\lambda_0$, the frog model with $\mathrm{Poiss}(\lambda)$ sleeping frogs at each non-root vertex is recurrent for ${\sf GW}$--a.s. every tree ${\bf T}$.  Further, $\log(\lambda_0) = O(\epsilon^{-2} + \epsilon^{-1}\log\E[Z^{4 + \epsilon}]).$
	\end{theorem}
	
	\medskip

	The main step in proving Theorem \ref{theorem:mrrjg} will consist of establishing a proposition that forms the essence of the bootstrapping argument referenced in Section \ref{sec:jgsm1} (see the second step from the sketch of the recurrence proof).  

	\begin{prop} \label{pr:iteratejg}
		
		There is a constant $C > 0$ so that the following holds.  For any $\lambda \geq C(\E[Z^4] + 1)$ and $\alpha \geq C\left(\eps^{-1} \log \E[Z^{4+\eps}] + \eps^{-2} + 1\right)$, if for all $n\geq1$ we have  
		$$\E_{{\sf AGW}_n}\left[\sum_{v'\in{\bf T}_1}{\sf HARM}_{{\bf T}}(v') 	\ATFM^{(\lambda)}_{{\bf T}}(v'\ \text{is not activated})\right]\leq e^{-\alpha}$$
		then 
		$$\E_{{\sf AGW}_n}\left[\sum_{v'\in{\bf T}_1}{\sf HARM}_{{\bf T}}(v') 	\ATFM^{(\lambda)}_{{\bf T}}(v'\ \text{is not activated})\right] \leq \frac{e^{-\alpha}}{2}\,.$$
		
	\end{prop}

	

	\begin{proof}[Proof of Theorem \ref{theorem:mrrjg}:]
		By monotonicity of the frog model with respect to $\lambda > 0$, it is sufficient to show recurrence for $\lambda_0$ defined by $\log \lambda_0 = C (\eps^{-2} + \eps^{-1} \log \E[Z^{4+\eps}])$ for sufficiently large $C$.  Additionally, by Lemma \ref{lem:truncated-frog} it is sufficient to show recurrence for the truncated frog model.   Recall that we let $v_0,v_1,\dots$ represent the vertices of a nonbacktracking path sampled according to the harmonic measure; the first step in completing the proof of the theorem will be to use Proposition \ref{pr:iteratejg} to show that for all $n\geq 1$, we have 
		\begin{equation}\label{sepvnp1ae1}\E_{{\sf AGW}\times{\sf HARM}_{{\bf T}}}\bigg[{\ATFM}^{(\lambda_0)}_{{\bf T}(v_n)}(v_{n+1}\ \text{is not activated})\bigg]=0.
		\end{equation}
		In light of Proposition \ref{pr:iteratejg}, in order to do this it will suffice to show that expectation in \eqref{sepvnp1ae1} is at most $e^{-\alpha_0}$ for every $n\geq 1$ where $\alpha_0 = C'(\eps^{-1} \log \E[Z^{4+\eps}] + \eps^{-2} )$.  Proposition \ref{pr:iteratejg} will then imply \eqref{sepvnp1ae1}.  Now using Lemma \ref{lem:lljgyhiyy}, we see that the task of establishing \eqref{sepvnp1ae1} can be further reduced to showing that
		\begin{equation}\label{lespvpajg}
			\E_{{\sf GW}}\bigg[\sum_{v'\in{\bf T}_1}{\sf HARM}_{{\bf T}}(v') {\ATFM}^{(\lambda_0)}_{{\bf T}}(v'\ \text{is not activated})\bigg]<\frac{1}{C}e^{-\alpha_0}
		\end{equation}
		since $\lambda_0$ is large enough to meet the conditions of Proposition \ref{pr:iteratejg} (where $C$ represents the universal constant appearing in Lemma \ref{lem:lljgyhiyy}).
		
		Noting that for each $v'\in{\bf T}_1$ the number of particles originating at the root of ${\bf T}^+$ that hit $v'$ is dominated by $\text{Poiss}\Big(\frac{c\lambda_0}{|{\bf T}_1|}\Big)$, we then observe that the expression inside the expectation in \eqref{lespvpajg} can be bounded above by $e^{-\frac{c\lambda_0}{|{\bf T}_1|}}$. 
		Therefore, it follows that the left-hand-side of \eqref{lespvpajg} is bounded above by
		$$\E_{{\sf GW}}\bigg[e^{-\frac{c\lambda_0}{|{\bf T}_1|}}\bigg]\leq \E_{{\sf GW}} \left[\frac{C'' |\mathbf{T}_1|}{\lambda_0} \right] = C'' \frac{\mu}{\lambda_0}$$ for some universal $C'' > 0$.  By adjusting the constant in the definition of $\lambda_0$ we see indeed have $$C'' \frac{\mu}{\lambda_0} \leq e^{-\alpha_0}$$ thus establishing \eqref{lespvpajg}; by Lemma \ref{lem:lljgyhiyy} and Proposition \ref{pr:iteratejg} we thus have established \eqref{sepvnp1ae1}.  
		
		This shows that for each $n\geq 1$ we have $$\sum_{v \in \bT_n} \HARM_{\bT}(v) {\ATFM}^{(\lambda)}_{{\bf T}(\Par{v})}(v) (\text{is not activated}) = 0 \qquad \AGW-\text{a.s.}\,.$$
		By an application of Lemma \ref{lem:augmented}, this shows \begin{equation}\label{eq:all-vertices-activated}
			\TFM_{\bT}^{(\lambda)}(v \text{ is not activated}) = 1 \quad \forall v \in \bT \quad \AGW-\text{a.s.}
		\end{equation}
		
		To show that infinitely many particles reach the root, note that 
		by Lemma \ref{lem:base}, along with the Borel-Cantelli Lemma, we see that, for $N \geq C\mu$, there is a random $n_0$ with $\AGW(n_0 < \infty) = 1$ so that for all $n \geq n_0$ we have 
		\begin{equation}\label{bnovolnwmtncjg}\sum_{v'\in{\bf T}_n}{\sf HARM}_{{\bf T}}(v') {\bf 1}_{|{\bf T}_1(v')|\geq N}\geq\frac{1}{4}
		\end{equation}
		By \eqref{eq:all-vertices-activated}, all vertices are activated, and so combining \eqref{bnovolnwmtncjg} with Lemma \ref{lem:crphmjg} shows that almost surely infinitely many particles visit the root.  Combining this with Lemma \ref{lem:truncated-frog} completes the proof.
	\end{proof}

	The upper bound on $\lambda_c$ in Theorem \ref{theorem:mrecthjg} is likely not optimal, although it is strong enough to show that recurrence depends not only on the maximum possible value of $Z$, but on the entire degree distribution.

	\begin{cor} \label{cor:max-degree-example}
		For all $d$ sufficiently large, there exists an offspring distribution $Z$ supported in $\{2,\ldots,d\}$ with $\Prob(Z = d) > 0$ and  $\lambda \in (0,\infty)$ so that the Poisson frog model with density $\lambda$ is almost-surely recurrent on Galton-Watson trees with offspring distribution $Z$, but transient on the $d$-regular tree.
	\end{cor}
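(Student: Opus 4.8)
To prove Corollary~\ref{cor:max-degree-example}, the plan is to play the quantitative recurrence bound of Theorem~\ref{theorem:mrecthjg} against the transience bound of Theorem~\ref{th:transiencemd}. The point is that Theorem~\ref{theorem:mrecthjg} controls $\lambda_c$ through a fixed \emph{moment} of the offspring distribution, whereas the transience threshold of Theorem~\ref{th:transiencemd} for the $d$-regular tree grows \emph{linearly} in $d$; so if we can keep a moment of $Z$ bounded while still forcing $d$ into the support of $Z$, the recurrent and transient regimes will overlap. Concretely, fix $\epsilon=\tfrac12$, and for each $d$ let $Z=Z_d$ be the offspring law with $\Prob(Z_d=d)=d^{-5}$ and $\Prob(Z_d=2)=1-d^{-5}$. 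This $Z_d$ is supported in $\{2,\dots,d\}$, has $\Prob(Z_d=d)>0$, and satisfies $\Prob(Z_d\ge 2)=1$, so Theorem~\ref{theorem:mrecthjg} applies to it.

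First I would bound the moment: $\E[Z_d^{4+\epsilon}]=(1-d^{-5})2^{4.5}+d^{-5}d^{4.5}\le 2^{4.5}+1$, a constant independent of $d$. Feeding this and $\epsilon=\tfrac12$ into the estimate $\log\lambda_c=O(\epsilon^{-1}\log\E[Z^{4+\epsilon}]+\epsilon^{-2})$ from Theorem~\ref{theorem:mrecthjg}, whose implied constant is universal, gives $\lambda_c(Z_d)\le C^{*}$ for some absolute constant $C^{*}$ not depending on $d$. On the transient side, the $d$-regular tree rooted at a vertex has every non-root vertex with exactly $d-1\ge 2$ children (and the root with $d$), so Theorem~\ref{th:transiencemd} with $k=d-1$ shows the Poisson frog model on it with density $\lambda$ is transient whenever $\lambda<\tfrac{(d-2)^2}{4(d-1)}$. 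For all sufficiently large $d$ we have $\tfrac{(d-2)^2}{4(d-1)}>C^{*}+1$, so setting $\lambda:=C^{*}+1$ gives simultaneously $\lambda>\lambda_c(Z_d)$ and $\lambda<\tfrac{(d-2)^2}{4(d-1)}$. By Theorem~\ref{theorem:mrecthjg} the model with density $\lambda$ is then almost surely recurrent on Galton--Watson trees with offspring law $Z_d$, and by Theorem~\ref{th:transiencemd} it is transient on the $d$-regular tree, which is the claim.

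There is essentially no obstacle beyond the two cited theorems; the only thing to arrange by hand is the tension between wanting $\Prob(Z=d)>0$ (so that the maximal degree genuinely equals $d$) and wanting $\E[Z^{4+\epsilon}]$, and hence the upper bound on $\lambda_c$, to stay bounded as $d\to\infty$. This is resolved by letting $\Prob(Z=d)$ decay fast enough polynomially in $d$ (here $d^{-5}$; any polynomial decay faster than $d^{-(4+\epsilon)}$ would do) so that the atom at $d$ contributes only $O(1)$ to the relevant moment. Thus the crude bound on $\lambda_c$ from Theorem~\ref{theorem:mrecthjg}, far from optimal though it is, already suffices: it shows recurrence of the frog model on Galton--Watson trees cannot depend on the maximal degree alone, in contrast to the contact process.
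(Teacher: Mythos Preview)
Your proof is correct and follows essentially the same approach as the paper: the same offspring distribution $Z_d$ with $\Prob(Z_d=d)=d^{-5}$ and $\Prob(Z_d=2)=1-d^{-5}$, the same moment bound yielding a uniform upper bound on $\lambda_c(Z_d)$, and the same comparison against the linearly growing transience threshold on the $d$-regular tree. The only cosmetic differences are that the paper uses $\epsilon=1$ (bounding $\E[Z^5]$) and cites \cite{HJJ1} directly for transience on the $d$-regular tree, whereas you take $\epsilon=\tfrac12$ and invoke the paper's own Theorem~\ref{th:transiencemd} with $k=d-1$; both are entirely equivalent.
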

	\begin{proof}
		By \cite{HJJ1}, there exists a constant $c > 0$ so that the Poisson frog model with density $\lambda$ is transient on the $d$-regular tree for $\lambda \leq c d$.  Now if we let $Z$ be the random variable $2 + (d-2)\xi$ where $\xi$ is Bernoulli with success probability $1/d^5$, then $\E[Z^5] = O(1)$.  By Theorem \ref{theorem:mrrjg}, this implies the Poisson frog model with density $\lambda$ is almost surely recurrent on Galton-Watson trees generated by $Z$ provided $\lambda \geq C$ for some $C > 0$.  Taking $d$ large enough that $c d > C$ completes the proof.
	\end{proof}

	\section{Proof of Proposition \ref{pr:iteratejg}} \label{sec:proof-prop}

	\subsection{Sketch of the proof}	\label{sec:jgsm1}
	
	The proof of Proposition \ref{pr:iteratejg} consists of a delicate bootstrapping argument; here, we isolate many of the key ideas, with the hope of providing a useful road-map through the proof.  
	
	The hypothesis of Proposition \ref{pr:iteratejg} states that \begin{equation} \label{eq:prop-hyp}
		\E_{{\sf AGW}_n}\bigg[\sum_{v'\in{\bf T}_1}{\sf HARM}_{{\bf T}}(v') 
		{\ATFM}^{(\lambda)}_{{\bf T}}(v'\ \text{is not activated})\bigg]\leq e^{-\alpha} \,.
	\end{equation}
	
	Our first main step is to show that under the assumption of \eqref{eq:prop-hyp}, a large proportion of far-away vertices will be activated.  In particular, we introduce the following ``bad'' event: 
	
	\begin{defn}
		In an instance of the truncated frog model on a tree ${T}$ and given an integer $m$ and $v \in T$, define the event  ${\mathcal{E}} = \mathcal{E}(v,m)$ via \begin{equation}
			\mathcal{E} = \left\{ \sum_{\substack{v' \in { T}_m(v) \\ v' \text{ is activated} }}{\sf HARM}_{{ T}(v)}(v')<\frac{1}{2} \right\}\,.
		\end{equation}
	\end{defn}
	
	Ultimately we will take $m = \lfloor e^{\alpha/2} \rfloor$.  Iterating \eqref{eq:prop-hyp} will show a simple upper bound on the event that $\cE$ holds for all children of the root (Lemma \ref{lem:claim1}).  We want to upgrade this bound to have probability on the order of $e^{-\alpha}$.  We will achieve this in Lemma \ref{lem:claim2} in two steps: first, \eqref{eq:prop-hyp} will show that it is rare for only one vertex in $T_1$ to be activated; second, we will bound the event that at least two distinct vertices  $v'$ and $v''$ in $T_1$ satisfy $\cE$.  By ignoring all particles originating outside of the trees $T(v')$ and $T(v'')$, we see that the probability $\cE$ occurs can only \emph{increase}.  This will allow us to upper bound the event that $\cE$ occurs for both $v'$ and $v''$ by a product of their probabilities (Claim \ref{claim-in-claim2}). 
	
	With Lemma \ref{lem:claim2} in-hand we then proceed to perform the boot-strapping step.  The game here is to win over $e^{-\alpha}$ by a multiplicative factor of $1/2$ in the right-hand of side of \eqref{eq:prop-hyp}.  Active particles on the first level are either activated by particles starting at $v$---recall that in the augmented truncated frog model there are $\Poiss(\lambda)$ particles at $v$---or particles from a different child of $v$.  To handle the latter, we will use $\cE$ to identify many activated particles at height $m$ along with Lemma \ref{lem:crphmjg} to compare the harmonic measure and return probabilities.  To handle particles activated by those starting at $v$, we either have that the first generation is small---and thus one of the $\Poiss(\lambda)$ particles is likely to land there---or the first generation is large, which is rare for Galton-Watson trees. As such, we break up the proof into the case of small $|T_1|$ and large $|T_1|$; these are handled in Lemmas \ref{lem:claim3} and \ref{lem:claim4} respectively. 
	
	The proofs of Lemmas \ref{lem:claim3} and \ref{lem:claim4} follow similar paths.  For each vertex $v' \in T_1$, we want to upper bound the probability that $v'$ is not activated.  To identify particles that will activate $v'$, we will want to condition on $\cE^c$ holding for some child of the root, which holds with probability $C e^{-\alpha}$ for our choice of $m = \lfloor e^{\alpha/2} \rfloor$.  To improve over this constant $C$ in the case when $T_1$ is small, we will use the $\Poiss(\lambda)$ particles at the root to beat this constant (see \eqref{eq:claim3-first-split}); in the case when $T_1$ is large, we will use a tail bound on the offspring distribution $Z$ to make this probability as small as we want.  To balance these two cases, our cutoff for ``small'' versus ``large'' is  at $|T_1| < \sqrt{\lambda}$.  
	
	Once we have that the event $\cE^c$ holds for some child of the root, it just remains to show that many particles visit the root.  The main tool will be Lemma \ref{lem:crphmjg}, although it is only useful when then quantities $|T_1(u)|$ and $|T_1(\Par{v})|$ are not too large.  As such, we will eliminate certain ``bad tree events'' in which the harmonic measure has a large contribution from nodes that have many children using the tools of Section \ref{sec:cutting}.  The particular notions of ``bad tree'' are slightly different in Lemmas \ref{lem:claim3} and \ref{lem:claim4}, due only to the fact that in Lemma \ref{lem:claim4} we may have a large first generation.  Once these ``bad trees'' are eliminated, a Poisson thinning argument---Claims \ref{claim-in-claim3} and \ref{claim-in-claim4} in Lemmas \ref{lem:claim3} and \ref{lem:claim4} respectively---will identify many active particles with the potential to activate $v'$.

	\subsection{Setting up the bootstrap: identifying many activated sites}

	The main goal of this subsection is to prove Lemma \ref{lem:claim2}, which bounds the event that $\cE$ holds for all first generation vertices.  We first compute an expectation, which will later be used for bounding the probability $\cE$ holds.  
	
	\begin{lemma}\label{lem:claim1}
		Suppose that for some $n, \alpha,\lambda$ we have $$\E_{{\sf AGW}_n}\bigg[\sum_{v'\in{\bf T}_1}{\sf HARM}_{{\bf T}}(v') {\ATFM}^{(\lambda)}_{{\bf T}}(v'\ \text{is not activated})\bigg]\leq e^{-\alpha}\,.$$
		Then for all $m$ we have 
		\begin{equation}\label{ineqtfmmejg}
			\E_{{\sf AGW}_n \times {\ATFM}^{(\lambda)}_{{\bf T}}}\left[\sum_{\substack{v' \in {\bf T}_m \\ v' \text{ is activated} }}{\sf HARM}_{{\bf T}}(v')\right]\geq 1-me^{-\alpha}.
		\end{equation}
	\end{lemma}
	\begin{proof}
		The proof will consist of iterating the hypothesis $m$ times.  We start by noting that for any possible tree $T$, non-backtracking path $\omega = (v_0,v_1,\ldots)$, and any pair of positive integers $n,m$, we have \begin{align*}{\ATFM}^{(\lambda)}_{T(v_n)}(v_{n+m}\ \text{is activated}) 
			&\geq\prod_{j=0}^{m-1}{\ATFM}^{(\lambda)}_{T(v_{n+j})}(v_{n+j+1}\ \text{is activated})\\
			&=\prod_{j=0}^{m-1}\left(1-\bigg(1-{\ATFM}^{(\lambda)}_{T(v_{n+j})}(v_{n+j+1}\ \text{is activated})\bigg)\right)\\
			&\geq 1-\sum_{j=0}^{m-1}\left(1-{\ATFM}^{(\lambda)}_{T(v_{n+j})}(v_{n+j+1}\ \text{is activated})\right).
		\end{align*}
		Combining this bound with the assumption proves
		\begin{align*}
			\E_{{\sf AGW}\times{\sf HARM}_{{\bf T}(v_n)}}&\bigg[{\ATFM}^{(\lambda)}_{{\bf T}(v_n)}(v_{n+m}\ \text{is activated})\bigg]\\&\geq 1-\sum_{j=0}^{m-1}\left(1-\E_{{\sf AGW}\times{\sf HARM}_{{\bf T}(v_n)}}\bigg[{\ATFM}^{(\lambda)}_{{\bf T}(v_{n+j})}(v_{n+j+1}\ \text{is activated})\bigg]\right)\\&\geq 1-m e^{-\alpha}\,.
		\end{align*}
	\end{proof}

	We are now ready to state and prove the main result of this subsection.  While this Lemma is stated for all $m \geq 1$, we will ultimately take $m = \lfloor e^{\alpha/2}\rfloor$.

	\begin{lemma}\label{lem:claim2}
		There exists a universal constant $C >0$ so that the following holds.  If for all $n \geq 1$ we have $$\E_{{\sf AGW}_n}\bigg[\sum_{v'\in{\bf T}_1}{\sf HARM}_{{\bf T}}(v') {\ATFM}^{(\lambda)}_{{\bf T}}(v'\ \text{is not activated})\bigg]\leq e^{-\alpha}$$
		then for all $m \geq 1$ we have 
		\begin{equation}\label{ineqequivcjg}
			\AGW_n \times {\ATFM}^{(\lambda)}_{{\bf T}}\left(\forall\ v\in{\bf T}_1\,, \cE(v,m) \ \text{ holds}\right)\leq  Ce^{-\alpha}\bigg(1+m^2 e^{-\alpha}\bigg).
		\end{equation}
	\end{lemma}
	\begin{proof}
		Let $T$ be generated by ${\sf AGW}_n$.  Let $v^*$ be the vertex in $T_1$ that is hit by the frog originating at the root which follows a standard loop-erased path to $\infty$ (recall that the other $\text{Poiss}(\lambda)$ frogs starting at the root move according to a slightly different set of dynamics under $\ATFM_T^{(\lambda)}$).  In addition, we let $\hat{v}$ be the vertex in $T_1\setminus\{v^*\}$ that is hit by the frog with minimal index, out of all of the frogs originating at either the root or in $T(v^*)$ that hit $T_1\setminus\{v^*\}$ (presuming any such frogs exist). 
		Then we have 
		\begin{align}\label{ubtejgjg}&{\ATFM}^{(\lambda)}_{T}\bigg(\forall\ v\in T_1, \cE(v,m) \text{ holds}\bigg)\\
			&\leq{\ATFM}^{(\lambda)}_{T}\Big(\text{Only\ }1\ \text{vertex in}\ T_1\ \text{is activated}\Big)\nonumber\\
			&\quad +{\ATFM}^{(\lambda)}_{T}\Big(\text{At least}\ 2\ \text{vertices in}\ T_1\ \text{are activated, and both}\ v^*\ \text{and}\ \hat{v}\ \text{satisfy}\ \mathcal{E}\Big) \nonumber \,.
		\end{align}
		By Markov's inequality and Lemma \ref{lem:1/2}, we may bound the first term by \begin{align} \label{eq:one-activated}
			\AGW_n \times &{\ATFM}^{(\lambda)}_{T}\Big(\text{Only\ }1\ \text{vertex in}\ T_1\ \text{is activated}\Big) \\
			&\leq {\AGW_n} \times {\ATFM}^{(\lambda)}_{T}\left(\sum_{\substack{v' \in T_1\\v' \text{ not activated}}} \HARM_T(v') \geq 1/2 \right) \nonumber \\
			&\leq 2 e^{-\alpha} \nonumber 
		\end{align}
		where we used the assumption to bound the expectation when applying Markov's inequality.

		The first step to bounding the second term in \eqref{ubtejgjg} will be in the following claim. 
		\noindent	\begin{claim} \label{claim-in-claim2}
			\begin{align*}
				&{\ATFM}^{(\lambda)}_{T}\Big(\text{At least}\ 2\ \text{vertices in}\ T_1\ \text{are activated, and both}\ v^*\ \text{and}\ \hat{v}\ \text{satisfy}\ \mathcal{E}\Big)\\
				&\leq\sum_{\substack{v',v'' \in T_1 \\ v'\neq v'' }}{\sf HARM}_T(v')\cdot\frac{{\sf HARM}_T(v'')}{1-{\sf HARM}_T(v')} {\ATFM}^{(\lambda)}_{T(v')}(\mathcal{E})\cdot {\ATFM}^{(\lambda)}_{T(v'')}(\mathcal{E})
			\end{align*}
		\end{claim}
		\noindent{\sc Proof of Claim \ref{claim-in-claim2}.}
		We first condition on the identity of $v^*$ and bound 
		\begin{align*}
			&{\ATFM}^{(\lambda)}_{T}\Big(\text{At least}\ 2\ \text{vertices in}\ T_1\ \text{are activated, and both}\ v^*\ \text{and}\ \hat{v}\ \text{satisfy}\ \mathcal{E}\Big)\\
			&\leq \sum_{\substack{v'\in T_1 }} \HARM_T(v') {\ATFM}^{(\lambda)}_{T}\Big( v' \text { satisfies }\cE, \hat{v}\ \text{satisfies}\ \mathcal{E} \,|\, v^* = v'\Big)
		\end{align*}
		where we interpret the probability on the right-hand side to be zero if no vertex in $T_1$ is activated other than $v'$; we note that the only reason this is an inequality rather than equality is due to the additional vertex added to the root of $T$ in $\ATFM_T^{(\lambda)}$.  Write $\mathcal{B}$ for the event that a particle starting in $T(v')$ ever reaches $T_1 \setminus \{v'\}$.   We may then write \begin{align*}
			{\ATFM}^{(\lambda)}_{T}\Big(& v' \text { satisfies }\cE, \hat{v}\ \text{satisfies}\ \mathcal{E} \,|\, v^* = v'\Big) \\
			&= {\ATFM}^{(\lambda)}_{T}\Big( v' \text { satisfies }\cE, \mathcal{B} \,|\, v^* = v'\Big) \\
			&\qquad \times \sum_{v'' \in T_1 \setminus \{v'\}}  {\ATFM}^{(\lambda)}_{T}\Big( v'' = \hat{v}, v''  \text { satisfies }\cE \,|\, v^* = v', \mathcal{B} \Big)\,.
		\end{align*}
		We may bound \begin{align*}
			{\ATFM}^{(\lambda)}_{T}\Big( v' \text { satisfies }\cE, \mathcal{B} \,|\, v^* = v'\Big) \leq {\ATFM}^{(\lambda)}_{T}\Big( v' \text { satisfies }\cE \,|\, v^* = v'\Big) = \ATFM^{(\lambda)}_{T(v')}(\cE)\,.
		\end{align*}
		
		Similarly bound \begin{align*} 
			{\ATFM}^{(\lambda)}_{T}\Big( v'' = \hat{v}, v''  \text { satisfies }\cE \,|\, v^* = v', \mathcal{B} \Big) = \frac{\HARM_T(v'')}{1 - \HARM_T(v')} \ATFM^{(\lambda)}_{T(v'')}(\cE)
		\end{align*}
		where we note that the first term is precisely the probability that a loop-erased random walk in $T$ starting at the root steps to $v''$ conditioned on it not stepping to $v'$.  Combining the previous four displayed equations completes the Claim. \hfill $\blacksquare$
		
		%

		By Lemma \ref{lem:1/2}, the harmonic measure on a level $1$ vertex differs from the uniform measure on $T_1$ by a multiplicative factor of at most $2$ (see the proof of Lemma \ref{lem:GW1} for the details of this argument).  We now may apply Claim \ref{claim-in-claim2} to see
		%
		\begin{align}\label{eq:E-sq}
			&{\ATFM}^{(\lambda)}_{T}\Big(\text{At least}\ 2\ \text{vertices in}\ T_1\ \text{are activated, and both}\ v^*\ \text{and}\ \hat{v}\ \text{satisfy}\ \mathcal{E}\Big)\\
			&\leq C\sum_{\substack{v',v'' \in T_1 \\ v'\neq v'' }}\frac{1}{\binom{|T_1|}{2}}\cdot{\ATFM}^{(\lambda)}_{T(v')}(\mathcal{E})\cdot{\ATFM}^{(\lambda)}_{T(v'')}(\mathcal{E})  \nonumber.  
		\end{align}
		for some constant $C>0$. 
		
		Combining \eqref{eq:one-activated} and \eqref{eq:E-sq} along with Lemma \ref{lem:lljgyhiyy} bounds

		\begin{align*}
			{\sf AGW}_n \times {\ATFM}^{(\lambda)}_{{\bf T}}\bigg(\forall\ v\in{\bf T}_1\,, \cE(v',m) \ \text{ holds}\bigg)\\
			\leq 2 e^{-\alpha} + C' \left[{\AGW_n} \times \ATFM_{\bT}^{(\lambda)}(\cE) \right]^2
		\end{align*}
		for a constant $C' > 0$.  Applying Lemma \ref{lem:claim1} and Markov's inequality bounds $${\AGW_n} \times \ATFM_{\bT}^{(\lambda)}(\cE) \leq 2m e^{-\alpha}$$
		completing the proof of the Lemma. 
	\end{proof}

	\subsection{Performing the bootstrap}
	We will break up the proof of Proposition \ref{pr:iteratejg} into two cases, depending on if the first generation is ``small'' or ``large''.  Our cutoff for these two is the event $|\bT_1| < \sqrt{\lambda}$ and its complement.  We handle the case of ``small'' $|\bT_1|$ first.

	\begin{lemma}\label{lem:claim3}
		There is a constant $C > 0$ so that the following holds.  For any $\alpha \geq 1$ and $\lambda \geq C \E[Z^4] $, if for all $n \geq 1$ we have $$\E_{{\sf AGW}_n}\bigg[\sum_{v'\in{\bf T}_1}{\sf HARM}_{{\bf T}}(v') {\ATFM}^{(\lambda)}_{{\bf T}}(v'\ \text{is not activated})\bigg]\leq e^{-\alpha}$$
		then $$\E_{{\sf AGW}_n}\bigg[\mathbf{1}_{|{\bf T}_1|<\sqrt{\lambda}}\sum_{v'\in{\bf T}_1}{\sf HARM}_{{\bf T}}(v') {\ATFM}^{(\lambda)}_{{\bf T}}(v'\ \text{is not activated})\bigg] \leq \frac{e^{-\alpha }}{4}\,.$$
	\end{lemma}
	\begin{proof}
		Let $T$ be a tree generated by $Z$ with $|T_1| < {\lambda}$ and $v' \in T_1$.  Note first that the probability none of the $\Poiss(\lambda)$ particles starting at the root of $T^+$ hit $v'$ is upper bounded by $e^{-C_1 \sqrt{\lambda}}$.  Set $m = \lfloor e^{\alpha/2}\rfloor$ and bound 
		\begin{align}&{\ATFM}^{(\lambda)}_{T}(v'\ \text{is not activated}) \label{eq:claim3-first-split}\\
			&\leq e^{-C_1\sqrt{\lambda}}{\ATFM}^{(\lambda)}_{T}(\forall ~v\in T_1(v^*), \cE(v,m)\text{ holds}) \nonumber  \\&+e^{-C_1\sqrt{\lambda}}{\ATFM}^{(\lambda)}_{T}(\exists\ v''\in T_1(v^*)\ \text{satisfying}\ \mathcal{E}^c\ \text{and nothing from}\ T(v'')\ \text{hits}\ v') \nonumber 
		\end{align}
		where we recall that $v^*$ is vertex in $T_1$ hit by the active particle that begins at the root of $T^+$.  Applying Lemma \ref{lem:claim2} along with Lemma \ref{lem:lljgyhiyy} shows a bound of \begin{equation}\label{eq:claim3-first-bound}
			\E_{{\sf AGW}_n}  e^{-C_1\sqrt{\lambda}}{\ATFM}^{(\lambda)}_{\bT}(\forall ~v\in \bT_1(v^*), \cE(v,m)\text{ holds}) \leq C e^{-C_1 \sqrt{\lambda}} e^{-\alpha}
		\end{equation}
		for some constant $C > 0$.  Moving on to the second term in \eqref{eq:claim3-first-split}, bound 
		\begin{align} \label{eq:claim3-find-one}
			\sum_{v' \in T_1} &\HARM_T(v') {\ATFM}^{(\lambda)}_{T}(\exists\ v''\in T_1(v^*)\ \text{satisfying}\ \mathcal{E}^c\ \text{and nothing from}\ T(v'')\ \text{hits}\ v') \\
			&\leq \max_{ \substack{v' \in T_1 \\ v'' \in T_2}}\ATFM_{T}^{(\lambda)}(\text{Nothing from }T(v'')\text{ hits }v'\,|\,v'' \in T_1(v^*), \cE^c(v'',m)\text{ holds}) \nonumber 
		\end{align}

		Define the event $B:=\{\exists\ v\in T_2:T(v)\in A\}\cup\{\exists\ v\in T_1:|T_1(v)|>e^{\alpha/3}\}$ where $A$ is the event from Lemma \ref{lem:A-rare}.  Since $Z$ has four moments, we see that for $N\geq C \mu$ we have 
		\begin{equation*}
			{\sf GW}(B)\leq\mu^2 C e^{-c2^{\frac{m}{2}}}+\mu \E[Z^3] e^{-\alpha} \leq C' \E[Z^4] e^{-\alpha}
		\end{equation*}
		where in the second bound we used $m = \lfloor e^{\alpha/2}\rfloor$ and the fact that for any random variable $X$ with $X \geq 1$ we have $\E X \E X^3 \leq \E X^4$\,. Applying Lemma \ref{lem:lljgyhiyy} shows \begin{equation}\label{eq:B-bound}
			\AGW_n(B)\leq C'' \E[Z^4] e^{-\alpha}
		\end{equation}
		We now bound the right-hand-side of \eqref{eq:claim3-find-one} in the case that $T \in B^c$. 
		\begin{claim}\label{claim-in-claim3}
			Let $T \in B^c$ satisfy $|T_1| < \sqrt{\lambda}$.  Then for any $v' \in T_1, v'' \in T_2$ we may bound $$\ATFM_{T}^{(\lambda)}(\text{Nothing from }T(v'')\text{ hits }v'\,|\,v'' \in T_1(v^*), \cE^c(v'',m)\text{ holds}) \leq Ce^{-\alpha}\,. $$
		\end{claim}
		\noindent{\sc Proof of Claim \ref{claim-in-claim3}.}
		Note that if $v''$ satisfies $\mathcal{E}^c$ and $T\in B^c$, then this implies that
		\begin{equation}\label{jinactabncjg}
			\sum_{\substack{v' \in T_i(v''),\ |T_1(v')|<\sqrt{\lambda} \\ v' \text{ is activated} }}{\sf HARM}_{T(v'')}(v')\geq\frac{1}{4}
		\end{equation} 
		for every $i$ with $\frac{m}{2}<i\leq m$.  In addition, since $|T_1|<\sqrt{\lambda}$, then since each vertex in $T(v'')$ begins with $\text{Poiss}(\lambda)$ inactive particles and since $T\in B^c$ implies that the parent of $v''$ has no more than $e^{\alpha/3}$ children, it must follow from Lemma \ref{lem:crphmjg} that, conditioning on \eqref{jinactabncjg} holding for each $i$ with $\frac{m}{2}<i\leq m$, the number of particles from $T(v'')$ that hit $v_0$ stochastically dominates $\text{Poiss}(\frac{C_4}{\lambda}\cdot \lambda\cdot\frac{m}{8}\cdot e^{-\alpha/3})=\text{Poiss}(C_5\cdot m\cdot e^{-\alpha/3})$ for some universal $C_5$. 
		To see this Poisson domination, note that conditioning on the collection of activated vertices does not give any information about the
		number of frogs at each vertex whose walks go to $v_0$, due to the dynamics of the truncated frog model.  Thus the probability $v'$ is not activated is at most $e^{-C_5 me^{-\alpha/3}}$.  Recalling $m = \lfloor e^{\alpha/2}\rfloor$ completes the Claim.
		\hfill $\blacksquare$

		Combining \eqref{eq:claim3-first-split}, \eqref{eq:claim3-first-bound} and \eqref{eq:claim3-find-one} with Claim \ref{claim-in-claim3} bounds \begin{align*}
			\E_{{\sf AGW}_n}&\bigg[\mathbf{1}_{|{\bf T}_1|<\sqrt{\lambda}}\sum_{v'\in{\bf T}_1}{\sf HARM}_{{\bf T}}(v') {\ATFM}^{(\lambda)}_{{\bf T}}(v'\ \text{is not activated})\bigg]\\
			&\leq C e^{-C_1 \sqrt{\lambda}} \left(e^{-\alpha} + \AGW_n(B)  \right)\\
			&\leq C' e^{-C_1 \sqrt{\lambda}}e^{-\alpha} \E[Z^4]\,.
		\end{align*}
		
		Taking $\lambda \geq C\E[Z^4]$ for some large but universal $C$ completes the claim.
	\end{proof}


	We now move on to the proof in the case of ``large'' $\bT_1$.
	\begin{lemma}\label{lem:claim4}
		There is a constant $C > 0$ so that the following holds.  For any $\alpha  \geq C(\epsilon^{-1}\log \E[Z^{4 + \epsilon}] + \epsilon^{-2})$ and $\lambda \geq C\big(\E[Z^{4 + \epsilon}]^{\frac{2}{4+\epsilon}}\big)$, if for all $n \geq 1$ we have 
		
		$$\E_{{\sf AGW}_n}\bigg[\sum_{v'\in{\bf T}_1}{\sf HARM}_{{\bf T}}(v') {\ATFM}^{(\lambda)}_{{\bf T}}(v'\ \text{is not activated})\bigg]\leq e^{-\alpha}$$
		then $$\E_{{\sf AGW}_n}\bigg[\mathbf{1}_{|{\bf T}_1|\geq \sqrt{\lambda}}\sum_{v'\in{\bf T}_1}{\sf HARM}_{{\bf T}}(v') {\ATFM}^{(\lambda)}_{{\bf T}}(v'\ \text{is not activated})\bigg] \leq \frac{e^{-\alpha }}{4}\,.$$
	\end{lemma}
	\begin{proof}
		We begin along the same lines as Lemma \ref{lem:claim3}.  Let $T$ be a tree generated by $Z$ with $|T_1| \geq \lambda$ and $v' \in T_1$.  For $m = \lfloor e^{\alpha/2} \rfloor$ we bound 	
		\begin{align}{\ATFM}^{(\lambda)}_{T}&(v'\ \text{is not activated}) \label{eq:claim4-first-split}\\
			&\leq{\ATFM}^{(\lambda)}_{T}(\forall ~v \in T_1(v^*), \cE(v,m)\text{ holds}) \nonumber  \\&+{\ATFM}^{(\lambda)}_{T}(\exists\ v''\in T_1(v^*)\ \text{satisfying}\ \mathcal{E}^c\ \text{and nothing from}\ T(v'')\ \text{hits}\ v')\,. \nonumber 
		\end{align}
		
		To bound the first term on the right-hand side of \eqref{eq:claim4-first-split}, we will use Lemma \ref{lem:claim2} along with the tree comparison statement Lemma \ref{lem:GW1}.  In particular we see \begin{align*}
			&\E_{\AGW_n}\left[\one_{|\bT_1| \geq \sqrt{\lambda}} {\ATFM}^{(\lambda)}_{\bT}(\forall ~v\in \bT_1(v^*), \cE(v,m)\text{ holds}) \right] \\
			&= \sum_{j \geq \sqrt{\lambda}} \AGW_n(|\bT_1| = j) \E_{\AGW_n}\left[ {\ATFM}^{(\lambda)}_{\bT}(\forall ~v\in \bT_1(v^*), \cE(v,m)\text{ holds})\big| |\bT_1| = j \right]   \\
			&\leq C \P(Z \geq \sqrt{\lambda}) e^{-\alpha}  
		\end{align*}
		where the last inequality follows from Lemma \ref{lem:claim2} along with the tree-comparison statements Lemma \ref{lem:GW1} and Lemma \ref{lem:lljgyhiyy}.  In particular, by Markov's inequality we have \begin{equation}\label{eq:claim4-first-bound}
			\E_{\AGW_n}\left[\one_{|\bT_1| \geq \sqrt{\lambda}} {\ATFM}^{(\lambda)}_{\bT}(\forall ~v\in \bT_1(v^*), \cE(v,m)\text{ holds}) \right] \leq C' e^{-\alpha} \frac{\E[Z^2]}{\lambda}\,.
		\end{equation}
		
		Moving on to the second term in \eqref{eq:claim4-first-split},  we proceed as in Lemma \ref{lem:claim3}.  If we let $v^*$ be the vertex that the first active particle starting at the root goes to, we again have \eqref{eq:claim3-find-one} holds: \begin{align} \label{eq:claim4-find-one}
			\sum_{v' \in T_1} &\HARM_T(v'){\ATFM}^{(\lambda)}_{T}(\exists\ v''\in T_1(v^*)\ \text{satisfying}\ \mathcal{E}^c\ \text{and nothing from}\ T(v'')\ \text{hits}\ v') \\
			&\leq \max_{ \substack{v' \in T_1 \\ v'' \in T_2}} \ATFM_{T}^{(\lambda)}(\text{Nothing from }T(v'')\text{ hits }v'\,|\,v'' \in T_1(v^*), \cE^c(v'',m)\text{ holds})\,. \nonumber 
		\end{align}
		
		We will now similarly cut out ``bad trees'' as we did in the case of Lemma \ref{lem:claim3}.  Set $R = \exp(\frac{\alpha}{4 + \eps/2})$; for the event $A$ defined in Lemma \ref{lem:A-rare}, define the event $\mathcal{A}$ via \begin{equation}\label{eq:cA-def}
			\mathcal{A} = \{\exists~v \in \bT_1(v^*) : \bT(v) \in A  \}  \cup \{\bT_1 \geq R \} \cup \{\bT_1(v^*) \geq R\}\,.
		\end{equation}
		Away from the event $\mathcal{A}$ we will bound the right-hand-side of \eqref{eq:claim4-find-one}.  
		
		\begin{claim}\label{claim-in-claim4}
			Let $T$ be a rooted tree with $u \in T_1, w \in T_2$.  Suppose that $T$ satisfies $|T_1| < R$, $|T_1(\Par{w})| < R$ and for all $v' \in T_1(\Par{w})$ we have  $T(v') \notin A$.  Then we have $$\ATFM_{T}^{(\lambda)}(\text{Nothing from }T(w)\text{ hits }u\,|\,w \in T_1(v^*), \cE^c(w,m)\text{ holds}) \leq C\exp\left(-c\sqrt{\lambda} e^{\frac{\eps \alpha}{16 + 2\eps}}  \right)$$
			for universal constants $C, c > 0$.
		\end{claim}
		\noindent{\sc Proof of Claim \ref{claim-in-claim4}.}
		Note that since $T(w) \notin A$ and $\cE^c(w,m)$ holds, we have that 
		\begin{equation}\label{eq:claim4-claim-bd}
			\sum_{\substack{v' \in T_i(w),\ |T_1(v')|<\sqrt{\lambda} \\ v' \text{ is activated} }}{\sf HARM}_{T(w)}(v')\geq\frac{1}{4}
		\end{equation} 
		for every $i$ with $\frac{m}{2}<i\leq m$.  Since $|T_1| < R$ and $|T_1(\Par{w})| < R$ and since each vertex in $T(w)$ begins with $\text{Poiss}(\lambda)$ inactive particles, it follows from Lemma \ref{lem:crphmjg} that, conditioning on \eqref{eq:claim4-claim-bd} holding for each $i$ with $\frac{m}{2}<i\leq m$, the number of particles from $T(w)$ that hit $u$ stochastically dominates $\text{Poiss}(c \sqrt{\lambda} m / R^2)$ for some universal $c > 0$.  Recalling $m = \lfloor e^{\alpha/2}\rfloor$ and $R = \exp(\frac{\alpha}{4 + \eps/2})$ completes the Claim.
		\hfill $\blacksquare$
		
		Combining \eqref{eq:claim4-first-split}, \eqref{eq:claim4-first-bound} and \eqref{eq:claim4-find-one} with Claim \ref{claim-in-claim4} bounds \begin{align*}
			\E_{{\sf AGW}_n}&\bigg[\mathbf{1}_{|{\bf T}_1|\geq \sqrt{\lambda}}\sum_{v'\in{\bf T}_1}{\sf HARM}_{{\bf T}}(v'){\ATFM}^{(\lambda)}_{{\bf T}}(v'\ \text{is not activated})\bigg] \\
			&\leq C' e^{-\alpha} \frac{\E[Z^2]}{\lambda} + C'\exp\left(-c\sqrt{\lambda} e^{\frac{\eps \alpha}{16 + 2\eps}}  \right) + {\AGW_n} \times \ATFM_{\bT}^{(\lambda)}(\mathcal{A}) 
		\end{align*}
		where $\mathcal{A}$ is the event defined in \eqref{eq:cA-def}.  We may bound \begin{align*}
			{\AGW_n} &\times \ATFM_{\bT}^{(\lambda)}(\mathcal{A}) \\
			&= \AGW_{n+1}(\exists~v \in \bT_1 : \bT(v) \in A ) + \AGW_n(|\bT_1| \geq R) + \AGW_{n+1}(|\bT_1| \geq R) \\
			&\leq C \mu \exp(-c 2^{m/2}) + C \P(Z \geq R) \\
			&\leq C' \left(\mu \exp(-c 2^{m/2}) + \E[Z^{4 + \eps}] \exp\left(-\left(1 + \frac{\eps}{8+\eps}\right)\alpha\right)\right)\,.
		\end{align*}
		Choosing $C$ sufficiently large for $\lambda \geq C \E[Z^2]$ we have $$ C' e^{-\alpha} \frac{\E[Z^2]}{\lambda} \leq \frac{e^{-\alpha}}{16}\,.$$
		For $\alpha \geq C (\eps^{-2} + 1)$ with $C$ large enough we have $e^{\frac{\eps \alpha}{16 + 2\eps}} \geq \alpha$ and so $$C'\exp\left(-c\sqrt{\lambda} e^{\frac{\eps \alpha}{16 + 2\eps}}  \right) \leq \frac{e^{-\alpha}}{16}\,. $$
		For  $\alpha  \geq C(\epsilon^{-1}\log \E[Z^{4 + \epsilon}] +  1)$ we may take $C$ large enough so that $$C'\E[Z^{4 + \eps}] \exp\left(-\left(1 + \frac{\eps}{8+\eps}\right)\alpha\right) \leq \frac{e^{-\alpha}}{16}\,.$$
		Finally since $m = \lfloor e^{\alpha/3} \rfloor$ for $\alpha \geq C( \log \mu  + 1 )$ we have $$C' \mu \exp(-c e^{m/2}) \leq \frac{e^{-\alpha}}{16}\,.$$
		Combining the previous displayed equations completes the proof.
	\end{proof}

	\begin{proof}[Proof of Proposition \ref{pr:iteratejg}]
		The proof follows from combining Lemmas \ref{lem:claim3} and \ref{lem:claim4}.
	\end{proof}

	\section{Counterexamples and open questions} \label{sec:questions}
	
	In this section we give an example of a tree that does not have a recurrent regime for the Poisson frog model.  In addition, we also provide an example of a tree for which the Poisson frog model has a non-trivial intermediate regime between recurrence and transience.  We conclude the section by discussing some remaining open problems.
	
	\subsection{A tree without a recurrent regime} 
	Let $\hat{T}$ be the rooted tree for which each vertex on level $n$ has $n+2$ children.  Using methods similar to those employed in the proof of Theorem \ref{th:transiencemd}, we will now establish the following transience result on $\hat{T}$.
	
	\begin{lemma} \label{lem:rtnrr} 
		The frog model on $\hat{T}$ with i.i.d. $\text{Poiss}(\lambda)$ frogs per non-root vertex is transient for every $\lambda>0$.
	\end{lemma}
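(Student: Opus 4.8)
The plan is to prove transience by showing that the \emph{expected} total number of visits to the root $\rtt$ by frogs is finite for every $\lambda>0$; since simple random walk on $\hat T$ is transient, this forces almost surely only finitely many frogs to ever reach $\rtt$. This is the same first--moment philosophy as in the proof of Theorem~\ref{th:transiencemd}, but the branching--random--walk domination used there is insufficient on $\hat T$: the dominating walk re-activates the low--degree vertices near $\rtt$ over and over, and in fact one can check it is recurrent once $\lambda$ is large, so we must use a feature special to the frog model.

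First I would reduce the problem. Let $V_\ell$ be the number of level-$\ell$ vertices ever visited by a frog, and let $g(\ell)$ denote the expected number of visits to $\rtt$ made by a simple random walk on $\hat T$ started from a level-$\ell$ vertex (well defined since $\hat T$ is spherically symmetric). The event that a vertex $v$ is ever visited is determined by the initial counts and trajectories at vertices other than $v$, hence is independent of the $\mathrm{Poiss}(\lambda)$ sleeping frogs placed at $v$ and of their (independent) trajectories; Wald's identity then gives
\begin{equation*}
\E\big[\#\{\text{frog visits to }\rtt\}\big]=g(0)+\lambda\sum_{\ell\ge 1}g(\ell)\,\E[V_\ell].
\end{equation*}
It therefore suffices to show that $g(\ell)$ decays very fast and that $\E[V_\ell]$ does not grow too fast.

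For the decay of $g$, project simple random walk on $\hat T$ to its level process: this is the birth--death chain on $\{0,1,2,\dots\}$ that from $\ell\ge 1$ moves down with probability $\tfrac1{\ell+3}$, so $\prod_{j=1}^{k}(q_j/p_j)=\tfrac2{(k+2)!}$, the chain is transient, and $\Prob_\ell(\text{hit }0)=O(1/(\ell+2)!)$; consequently $g(\ell)\le C/(\ell+2)!$ for a universal $C$, while $g(0)=G_{\hat T}(\rtt,\rtt)<\infty$. For the control of $\E[V_\ell]$ I would first record the structural fact that the set of visited vertices is a subtree containing $\rtt$ --- equivalently, the first frog to reach a non-root vertex arrives from its parent; this follows by taking the non-root vertex whose first visit occurs earliest among those first visited from a child and deriving a contradiction (a frog cannot first visit a vertex by being born there, since the frog that activated it was already present). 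Using this, each newly visited level-$(\ell+1)$ vertex is charged to a step from level $\ell$ to level $\ell+1$, which reduces bounding $\E[V_{\ell+1}]$ to bounding the expected number of frog-steps at level $\ell$ (equivalently, the expected total time frogs spend at level $\ell$). Writing the arrival balance at level $\ell$ (arrivals from below, from above, and from activations) produces a recursion of the form $\E[U_\ell]\le(1+\lambda)\E[U_{\ell-1}]+\tfrac1{\ell+3}\E[U_{\ell+1}]$, and the goal is to deduce $\E[V_\ell]\le K^\ell$ for a constant $K=K(\lambda)<\infty$; the finitely many small levels are absorbed into $K$ via the crude bound $V_\ell\le|\hat T_\ell|=(\ell+1)!$.

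The main obstacle is precisely this last recursion: it is genuinely self-referential, because frogs activated deep inside a subtree $\hat T(c)$ can drift back toward $\rtt$ and re-enter level $\ell$. What makes it close --- and the reason $\hat T$ is transient for \emph{all} $\lambda$, unlike the bounded-degree trees of Theorem~\ref{th:transiencemd} --- is that the unbounded degrees force return probabilities to decay factorially: a frog activated $k$ levels below $c$ returns to $c$ (hence to level $\ell$) with probability only $O(1/k!)$, so the total bounce-back from $\hat T(c)$ is geometrically negligible against the activated population once $\ell$ is moderately large, and feeding the bootstrap hypothesis $\E[V_{\ell'}]\le K^{\ell'}$ back through the Green's-function-type sum closes the induction. (Alternatively one may phrase this step as a supermartingale for the breadth-first exploration of the visited subtree, with weights $(\mathrm{const}\cdot\lambda)^{-\ell}$, which is the analogue of the weight-function argument in Theorem~\ref{th:transiencemd}.) Granting this, $\E\big[\#\{\text{frog visits to }\rtt\}\big]\le g(0)+C\lambda\sum_{\ell}K^\ell/(\ell+2)!\le g(0)+C\lambda e^{K}<\infty$ for every $\lambda>0$, so almost surely only finitely many frogs hit $\rtt$ and the frog model on $\hat T$ is transient.
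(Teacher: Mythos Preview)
Your proposal has a genuine gap at the central step: you never actually prove the bound $\E[V_\ell]\le K^\ell$ (equivalently $\E[U_\ell]\le K^\ell$). The recursion you write,
\[
\E[U_\ell]\le(1+\lambda)\,\E[U_{\ell-1}]+\tfrac1{\ell+3}\,\E[U_{\ell+1}],
\]
references $\E[U_{\ell+1}]$ on the right-hand side, so it cannot be iterated forward; nor can it be iterated backward without already knowing finiteness at infinity. Your remark that ``feeding the bootstrap hypothesis back through the Green's-function-type sum closes the induction'' is not an argument, and the phrase ``Granting this'' confirms that you are aware the step is missing. The crude bound $V_\ell\le(\ell+1)!$ handles small $\ell$ but does nothing to break the two-sided dependence for large $\ell$. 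Without this bound your first-moment computation does not get off the ground.

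The paper's proof avoids this difficulty entirely, and in fact repairs exactly the objection you raised against the branching-random-walk domination. You are right that the na\"ive dominating branching walk of Theorem~\ref{th:transiencemd} (spawn $\mathrm{Poiss}(\lambda)$ offspring at every downward step) is recurrent once $\lambda$ is large, because of repeated re-activation near the root. The paper's fix is to choose $N$ with $\tfrac{N^2}{4(N+1)}>\lambda$ and use a \emph{hybrid} dominating process: at levels below $N$, simply release all the $\mathrm{Poiss}(\lambda)$ frogs at time zero (no branching there at all), and allow branching only at levels $\ge N$, where the local minimum degree is at least $N+2$ and the supermartingale computation of Theorem~\ref{th:transiencemd} goes through. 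This still dominates the frog model because each vertex's sleeping frogs are activated at most once. A piecewise weight function $w(j)$---factorial-type for $j<N$, geometric for $j\ge N$---then makes $W_n=\sum_{f\in F_n}w(|f|)$ a supermartingale with contraction factor $m<1$, whence $W_n\to0$ a.s.\ and transience follows. This is precisely the ``supermartingale with level-dependent weights'' idea you allude to parenthetically; developing it would have been the simpler route.
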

	
	\begin{proof}
		We begin by defining the following branching model on $\hat{T}$ which dominates the frog model with i.i.d. $\text{Poiss}(\lambda)$ frogs per non-root vertex with respect to the number of returns to the root.  To start, we first select a positive integer $N$ that is large enough so that $\frac{N^2}{4(N+1)}>\lambda$.  We then assign i.i.d. $\text{Poiss}(\lambda)$ active particles to each non-root vertex on every level $n<N$, along with a single active particle at the root.  In addition, any time a particle takes a step away from the root and lands on a vertex $v$ for which $|v|\geq N$, it gives birth to $\text{Poiss}(\lambda)$ additional active particles at that vertex.  Next we define $\alpha:=\Big((\lambda+1)(N+1)\Big)^{-1/2}$ and the function $w:\mathbb{N}\to\mathbb{R}$ as 
		$$w(j)=\begin{cases}\Big(\frac{(j+2)!}{2}\Big)^{-1/2} &\text{if }j<N\\
			\Big(\frac{(N+1)!}{2}\Big)^{-1/2}\alpha^{j-N+1} &\text{otherwise}\end{cases}$$
		Now once again letting $F_n$ denote the set of active particles at time $n$, and for every $f\in F_n$ denoting its distance from the root as $|f|$, we define the weight function $$W_n:=\sum_{f\in F_n}w(|f|).$$
		
		Letting $f$ represent an active particle at level $j\geq N$ at some time $n$, we see from the formulas for $w$ and $W_n$ above that the expected contribution to $W_{n+1}$ by $f$, along with any progeny it has that are born at time $n+1$, is\begin{align}\label{smibt}\frac{1}{j+3}w(j-1)+\frac{j+2}{j+3}(\lambda+1)w(j+1)&=w(j)\bigg(\frac{1}{j+3}\alpha^{-1}+\frac{j+2}{j+3}(\lambda+1)\alpha\bigg)\\&\leq w(j)\bigg(\frac{1}{N+2}\alpha^{-1}+\frac{N+1}{N+2}(\lambda+1)\alpha\bigg)\nonumber\\&=w(j)\cdot\frac{2\alpha^{-1}}{N+2}\nonumber\\&<w(j)\nonumber\end{align}(where the string of inequalities follows from \eqref{cewnp1nr}, along with the fact that $\frac{N^2}{4(N+1)}>\lambda$, $\alpha=\big((\lambda+1)(N+1)\big)^{-1/2}$, and $j+3>N+2$).  If instead we have $|f|=N-1$ at time $n$, then the expected contribution of $f$ and its progeny at time $n+1$ will be

		\begin{align}\label{cenm1c}
			\frac{1}{N+2}w(N-2)&+\frac{N+1}{N+2}(\lambda+1)w(N) \\
			&=w(N-1)\bigg(\frac{1}{N+2}(N+1)^{1/2}+\frac{N+1}{N+2}(\lambda+1)\alpha\bigg)\nonumber \\&=w(N-1)\bigg(\frac{1}{N+2}\alpha^{-1}(\lambda+1)^{-1/2}+\frac{N+1}{N+2}(\lambda+1)\alpha\bigg)\nonumber\\&<w(N-1)\bigg(\frac{1}{N+2}\alpha^{-1}+\frac{N+1}{N+2}(\lambda+1)\alpha\bigg)\nonumber\\&=w(N-1)\cdot\frac{2\alpha^{-1}}{N+2}.\nonumber
		\end{align}Likewise, in the case where $f$ is on level $j$ at time $n$ with $1\leq j<N-1$, the expected contribution to $W_{n+1}$ made by $f$ (note $f$ has no progeny at time $n+1$) is\begin{align}\label{celtnm1nr}\frac{1}{j+3}w(j-1)+\frac{j+2}{j+3}w(j+1)&=w(j)\bigg(\frac{1}{j+3}(j+2)^{1/2}+\frac{j+2}{j+3}(j+3)^{-1/2}\bigg)\\&<w(j)\cdot\frac{2}{(j+3)^{1/2}}\nonumber\\&\leq w(j).\nonumber\end{align}Finally, if $f$ is located at the root at time $n$, then its expected contribution to $W_{n+1}$ is \begin{equation}\label{cecfat}w(1)=3^{-1/2}\cdot w(0).\end{equation}
		Now setting $m=\text{max}\{\frac{2}{\sqrt{5}},\frac{2\alpha^{-1}}{N+2}\}$ (note that $\frac{2}{\sqrt{5}}$ is the value we get by plugging $j=2$ into the expression that multiplies $w(j)$ on the second line of {\RED\eqref{celtnm1nr}}), we see that it follows from \eqref{smibt}--\eqref{cecfat} that if we sum over all $f\in F_n$, then we get $${\bf E}[W_{n+1}|W_n]\leq m\cdot W_n.$$Hence, this means that $\frac{W_n}{m^n}$ is a nonnegative super-martingale, which means it converges almost surely.  Since $m<1$, this then implies that $W_n\to 0$ almost surely, thus establishing transience of our branching model on $\hat{T}$ and completing the proof.
	\end{proof}
	
	\subsection{A tree without a {\BLUE $0$-$1$} law}\label{ss:no01}

	Here we provide an example of a rooted tree with no leaves or pipes that has a non-trivial intermediate phase, meaning recurrence occurs with probability strictly between $0$ and $1$.  While \cite{HJJ2} also provides an example of a tree without a {\BLUE $0$-$1$} law, their example contains an arbitrarily long path of vertices with $1$ child each.  We therefore adapt their example for the case of a tree that is an instance of the Galton-Watson measures we've been working with (keep in mind, however, that any given tree occurs with probability $0$, and in fact Theorem \ref{th:0-1-GW} states that the collection of trees with an intermediate regime has Galton-Watson measure zero).  
	
	Our example is presented in the form of the following lemma, in which $\lambda_1(T)$ and $\lambda_2(T)$ are used to refer to $\text{sup}\{\lambda:{\sf FM}^{(\lambda)}_T(\text{transience})=1\}$ and $\text{inf}\{\lambda:{\sf FM}^{(\lambda)}_T(\text{recurrence})=1\}$ respectively.
	
	\begin{lemma} \label{lem:nontrivial-intermediate}
		Let $T$ denote the rooted tree formed by joining each of the roots of the $2$-ary tree and $d$-ary tree to a single root by a pair of distinct edges.  For $d$ sufficiently large, $\lambda_1(T) < \lambda_2(T)$.
	\end{lemma}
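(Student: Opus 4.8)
The plan is to exploit the fact that the tree $T$ consists of a root $\rtt$ joined to the root $r_2$ of a $2$-ary tree and the root $r_d$ of a $d$-ary tree; the asymmetry of the two branches is what forces a non-trivial intermediate phase. First I would establish $\lambda_1(T) > 0$: for $\lambda$ small, the frog model is transient. This should follow from a branching-random-walk domination argument in the spirit of Theorem \ref{th:transiencemd}, but adapted to the non-homogeneous setting. One cannot apply Theorem \ref{th:transiencemd} directly because the root has degree $2$ and the minimum degree over all vertices is $2$ (the $2$-ary branch), which would give the useless bound $\mathbf{E}[\eta] < \frac{(2-1)^2}{4\cdot 2} = \frac18$; but in fact the relevant obstruction is that once a frog wanders deep into the $2$-ary subtree it behaves exactly as in the frog model on the binary tree, which by \cite{HJJ1} is transient for $\lambda$ below some threshold. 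So I would build a weight function that is the binary-tree weight on the $2$-ary branch and the $d$-ary weight on the $d$-ary branch, glued at the root, producing a supermartingale $W_n/m^n$ with $m<1$ for $\lambda$ sufficiently small, exactly as in the proofs of Theorem \ref{th:transiencemd} and Lemma \ref{lem:rtnrr}. This yields $\lambda_1(T)>0$.

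Next I would show that for $d$ large and for a range of $\lambda$ just above $\lambda_1(T)$, recurrence holds with probability strictly less than $1$ — in particular, with positive probability the frog model is transient. The key observation is that transience is possible precisely when the initial active frog at $\rtt$ steps into the $2$-ary branch (an event of probability $1/2$ on the first step, or more carefully, $r_2$ is hit before $r_d$ with probability bounded away from $0$) \emph{and} the frogs thereafter essentially never escape into the rich $d$-ary side. Concretely, condition on the event that the walk from the root enters the $2$-ary subtree and that, among the finitely many frogs that ever reach $\rtt$ from within the $2$-ary subtree, none of them ever cross over into the $d$-ary subtree. Since the frog model restricted to the $2$-ary branch (with the branch's root treated as a reflecting-type boundary absorbing into $\rtt$) is transient for $\lambda < \lambda_c(\text{binary tree})$, only finitely many frogs return to $\rtt$ from that side; each such returning frog independently has probability bounded below (uniformly, since it's a single SRW step distribution at $\rtt$) of \emph{not} subsequently entering the $d$-ary side before dying out in the $2$-ary side or returning to $\rtt$ finitely often. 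Multiplying these finitely many independent "no-crossover" probabilities gives a positive lower bound on the event that the $d$-ary branch is never activated at all, on which the model is manifestly transient. Hence $\mathsf{FM}^{(\lambda)}_T(\text{transience}) > 0$, so $\lambda_2(T) > \lambda_1(T)$ provided we also know $\lambda_1(T) < \infty$ — but that is immediate since the $d$-ary branch alone, for $d$ large, is recurrent for all $\lambda$ bounded below by a constant independent of $d$ (this is where "$d$ sufficiently large" enters, via \cite{HJJ1} or Theorem \ref{theorem:mrrjg}), so for $\lambda$ in an appropriate constant range the model is \emph{not} a.s.\ transient while the argument above shows it is also not a.s.\ recurrent.

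The main obstacle I expect is making rigorous the claim that only finitely many frogs return to $\rtt$ from the $2$-ary side and that each independently fails to cross over with probability bounded below. The subtlety is that the frog model is \emph{not} a simple branching process: frogs in the $2$-ary branch that are activated by crossovers from the $d$-ary side could in principle sustain activity. The clean way around this is to first condition on the $d$-ary branch never being entered (which is what we want to show has positive probability) and run the argument self-consistently: under this conditioning the dynamics on the $2$-ary branch are genuinely those of the frog model on the binary tree stopped at its root, to which the transience results of \cite{HJJ1} apply, and so only finitely many frogs $f_1, f_2, \ldots, f_K$ (with $K < \infty$ a.s.) ever reach $\rtt$. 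For each $f_i$, the first step it takes from $\rtt$ goes to $r_d$ with probability exactly $1/2$ and to $r_2$ with probability $1/2$ (degree-$2$ root); crucially, after such a frog wanders back into the $2$-ary side, by transience of that side it returns to $\rtt$ only finitely often, and an inductive bookkeeping over the finitely many $\rtt$-visits shows the total probability of ever routing into the $d$-ary side is at most $1$ minus a positive constant. Summing a geometric-type series over all frogs and all their $\rtt$-excursions, the probability of never entering the $d$-ary branch is positive, completing the proof that $\lambda_1(T) < \lambda_2(T)$.
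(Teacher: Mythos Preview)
Your argument has the roles of the two subtrees reversed, and this is fatal. You use the $2$-ary branch to produce transience (requiring $\lambda < \lambda_c(2)$) and claim the $d$-ary branch gives recurrence ``for all $\lambda$ bounded below by a constant independent of $d$.'' But $\lambda_c(d) \to \infty$ as $d \to \infty$ (indeed $\lambda_c(d) \geq c\,d$ by \cite{HJJ1}, as used in Corollary~\ref{cor:max-degree-example}), so for large $d$ the $d$-ary tree is \emph{transient} throughout any fixed bounded range of $\lambda$. Your scheme would need some $\lambda$ with $\lambda < \lambda_c(2)$ and $\lambda > \lambda_c(d)$ simultaneously, and for $d$ large that interval is empty.

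The paper's proof runs the other way: choose $d$ large so that $\lambda_c(d) > \lambda_c(2)$ and take $\lambda \in (\lambda_c(2), \lambda_c(d))$. Then the $2$-ary side is \emph{recurrent}, so if the initial frog escapes into that side one gets infinitely many returns to the root almost surely (positive probability of recurrence). Meanwhile the $d$-ary side is \emph{transient}, and by the Lemma~\ref{agwasrigwasr}-type argument, non-a.s.\ recurrence on the $d$-ary tree implies positive probability of \emph{zero} returns; hence with positive probability the initial frog enters the $d$-ary side and nothing ever comes back, giving positive probability of transience. This immediately gives $\lambda_1(T) \leq \lambda_c(2) < \lambda_c(d) \leq \lambda_2(T)$, with no need for the delicate ``finitely many returns, each independently fails to cross over'' bookkeeping you sketch. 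Swapping the roles of the branches fixes your argument and in fact simplifies it considerably.
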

	\begin{proof}
		After establishing in \cite{HJJ1} the existence of both recurrent and transient regimes for the frog model on regular trees, Hoffman, Johnson, and Junge were able to conclude, by virtue of a $0$-$1$ law which they proved in \cite{HJJ2}, that $\lambda_1=\lambda_2$ on the regular $d$-ary tree (hence, they simply refer to a single critical value that we call $\lambda_c(d)$).  As shown in \cite{HJJ1}, $\lambda_c(d) \to \infty$ as $d \to \infty$, and so for $d$ sufficiently large we have $\lambda_c(d) > \lambda_c(2)$.  Now arguing as in Lemma \ref{agwasrigwasr}, for each $\lambda$ that is strictly between $\lambda_c(2)$ and $\lambda_c(d)$, there is a positive probability that for the Poisson frog model on the $d$-ary tree with $\Poiss(\lambda)$ frogs per non-root vertex, no particles ever return to the root.  This means that for such a value of $\lambda$, there is positive probability that no particles visit the root of $T$.  Conversely, by a similar argument we also know that whenever the frog beginning at the root escapes inside the $2$-ary subtree (an event with positive probability) there will be infinitely many returns to the root almost surely.  For if this were not the case, then there would be positive probability of zero returns to the root when the frog starting at the root escapes inside of the $2$-ary subtree, which would then imply that the same would hold for the frog model on the $2$-ary tree itself, thus contradicting our assumption that $\lambda>\lambda_c(2)$.  Thus we have $\lambda_1(T)\leq\lambda_c(2)<\lambda_c(d)\leq\lambda_2(T)$.
	\end{proof}

	\subsection{Further questions} Our proof of recurrence in Section \ref{sec:proofmain} (see Theorem \ref{theorem:mrrjg}) relied on the offspring distribution $Z$ having more than four moments.  It seems highly unlikely however that these are the best possible conditions, and it even seems conceivable that the recurrence result could potentially be established without imposing any moment conditions at all.  Likewise, we also were not able to extend our recurrence or our transience results to offspring distributions that can take values less than $2$, due to the presence of arbitrarily long pipes.  
	
	\begin{question}
		Does there exist a transient regime for the frog model on supercritical Galton-Watson trees in the case of $\Prob(Z \leq 1) > 0$?
	\end{question}
	
	After posting this work on arXiv, the work \cite{MW} provided a positive partial answer to this question; here they introduce two parameters, $d_{\min} = \min\{j \geq 2: \P(Z  = j) > 0\}$ and $d_{\max} = \sup\{j : \P(Z = j) > 0\}$ and showed that if $d_{\min}$ is large enough as a function of the two values $\P(Z = 0)$ and $\P(Z = 1)$, then if the mean of the density of particles is smaller than a function of $\P(Z = 0), \P(Z = 1)$ and $d_{\max}$, then there is transience.  We conjecture that for any offspring distribution $Z$ with $\E Z > 1$, there is a transient regime.
	
	In general, random walk on trees where vertices may have $1$ child may be recurrent---such as on $\mathbb{Z}$.  On supercritical Galton-Watson trees, however, not only are random walks transient, but the speed is almost surely positive, so the existence of a transient regime is still very plausible. 
	Also still on the table is expanding the cases of the offspring distribution $Z$ for which a recurrent regime exists:
	
	\begin{question}
		For which offspring distributions does there exist a recurrent regime?  In particular, what about the case of $\Prob(Z \leq 1) > 0$ or when $Z$ has fewer than four moments.  
	\end{question}
	
	While there is no obvious monotonicity with respect to degree, it may be the case that there is some monotonicity lurking.  
	\begin{question}
		Is there some stochastic order $\leq$ so that if $Z_1$ and $Z_2$ are offspring distributions with $Z_1 \leq Z_2$ then $\lambda_c(Z_1) \leq \lambda_c(Z_2)$?
	\end{question}
	
	Finally, we suspect that $\lambda_c$ is a function of more than just the mean, but our bound on $\lambda_c$ cannot rule this out.  \begin{question}
		Does there exist a pair of random variables $Z_1$ and $Z_2$ with $Z_j \geq 2, \E Z_j^5 < \infty$ for $j \in \{1,2\}$ so that $\E Z_1 = \E Z_2$ but $\lambda_c(Z_1) \neq \lambda_c(Z_2)$?  
	\end{question}

	\begin{appendix}
		
		\section{Harmonic measure and return probability} \label{app:hitting} 
		
		In this section we tie up the loose ends of Section \ref{sec:SRW}.
		\begin{proof}[Proof of Lemma \ref{lem:crphmjg}]
			We begin by defining the following quantities: First, let $\tilde{p}(v,u)$ represent the probability that simple random walk on $T(v)$ beginning at $v$ ever hits $u$.  In addition, we define $\tilde{p}(u,\infty)$ to be the probability that random walk on $T(v)$ beginning at $u$ eventually escapes through one of the children of $u$.  Turning to random walk on $T$, we let $p(v,u)$ be the probability a simple random walk starting at $v$ ever hits $u$, and we define $p(v,-\infty)$ to be the probability that random walk beginning at $v$ eventually escapes through one of the children of the root other than the parent of $v$.  Now noting that ${\sf HARM}_{T(v)}(u)=\tilde{p}(v,u)\cdot\tilde{p}(u,\infty)$ and $p_0(u)=p(u,v)\cdot p(v,-\infty)$, we see that in order to complete the proof it will suffice to show that each of the two parts of the product expression \begin{equation}\label{bndpblwuod}\bigg(\frac{p(u,v)}{\tilde{p}(v,u)}\cdot\frac{|T_1(u)|}{|T_1(v)|}\bigg)\times\bigg(p(v,-\infty)\cdot|T_1(v)|\cdot|T_1(\Par{v})|\bigg)\end{equation} are bounded away from $0$.
			
			Looking first at the quantity $\frac{p(u,v)}{\tilde{p}(v,u)}$, we define $p^*(u,v)$ and $\tilde{p}^*(v,u)$ to be the probabilities that random walk on $T(v)$ beginning at $u$ ($v$ respectively) reaches $v$ ($u$ respectively) without first returning to its starting position, and note that
			\begin{equation}\label{inbnd1t1}\frac{p(u,v)}{\tilde{p}(v,u)}\geq\frac{p^*(u,v)}{\tilde{p}^*(v,u)}\cdot\frac{\tilde{p}^*(v,u)}{\tilde{p}(v,u)},
			\end{equation}
			and
			\begin{equation}\label{inbnd1t2}
				\frac{p^*(u,v)}{\tilde{p}^*(v,u)}=\frac{|T_1(v)|}{|T_1(u)|+1}.\end{equation}
			Now we let $p$ represent the probability that random walk on $T(v)$ beginning at $v$ ever returns to $v$, and let $p'$ represent the probability that random walk on $T(v)$ beginning at $v$ returns to $v$ without first hitting $u$.  Observing that $\tilde{p}(v,u)=\frac{\tilde{p}^*(v,u)}{1-p'}\leq\frac{\tilde{p}^*(v,u)}{1-p}$, and noting that the fact that each vertex of $T$ has at least two children implies that $p\leq\frac{1}{2}$, we see that, along with \eqref{inbnd1t1} and \eqref{inbnd1t2}, this implies that $$\frac{p(u,v)}{\tilde{p}(v,u)}\cdot\frac{|T_1(u)|}{|T_1(v)|}\geq\frac{|T_1(v)|}{|T_1(u)|+1}\cdot\frac{|T_1(u)|}{|T_1(v)|}\cdot(1-p)=\frac{|T_1(u)|}{|T_1(u)|+1}\cdot(1-p)\geq\frac{2}{3}\cdot\frac{1}{2}=\frac{1}{3}.$$ Likewise, for the second part of the product in \eqref{bndpblwuod}, we see that \begin{align*}
				p(v,-\infty)\cdot|T_1(v)|\cdot|T_1(\Par{v})|&\geq\bigg(\frac{1}{|T_1(v)|+1}\cdot\frac{1}{|T_1(\Par{v})|+1}\cdot\frac{|T_1|-1}{|T_1|}\cdot\frac{1}{2}\bigg)\\
				&\qquad \times\bigg(|T_1(v)|\cdot|T_1(\Par{v})|\bigg)\\
				&=\frac{|T_1(v)|}{|T_1(v)|+1}\cdot\frac{|T_1(\Par{v})|}{|T_1(\Par{v})|+1}\cdot\frac{|T_1|-1}{|T_1|}\cdot\frac{1}{2}\\&\geq\Big(\frac{2}{3}\Big)^3\cdot\frac{1}{2}=\frac{4}{27}\end{align*}(where both inequalities follow from the fact that each vertex has degree at least $3$), thus completing the proof of the lemma.
		\end{proof}

		\begin{proof}[Proof of Lemma \ref{lem:lljgyhi}]
			We'll start by looking at the case where each vertex of $T$ has degree at least three (thus excluding the case where the root has exactly two children).  Since every vertex in $T$ has degree at least three we know that the probability that simple random walk, upon hitting $v$, ever returns to $\Par{v}$ is equal to at most $\frac{1}{2}$.  Hence, we see that simple random walk on $T$ (starting at the root) escapes through $v$ with probability at least $\frac{1}{2}f(v)$, thus establishing the upper bound in \eqref{fboundbh} for $C=2$.  To establish the lower bound, we start by letting $v_0,v_1,\dots, v_n$ represent the vertices of the nonbacktracking path beginning with the root and ending with $v$.  Now using $p(v',B)$ to denote the probability that simple random walk on $T$ beginning at a vertex $v'$ ever hits some collection of vertices $B$, and letting $p(v',B,B')$ (where $B$ and $B'$ represent disjoint collections of vertices in $T$) refer to the probability that simple random walk beginning at $v'$ eventually hits $B$ without first hitting $B'$, we find that
			\begin{align}\label{hmfh}p(v_0,v)-f(v)&\leq\sum_{j=0}^{n-1}p(v_0,v_j)\cdot p\Big(v_j,\{T_{n-j}(v_j)\setminus T_{n-j-1}(v_{j+1})\},v\big)\\ 
				&\qquad \times\Big[\underset{v'\in T_{n-j}(v_j)}{\text{max}}p(v',v_j)\Big]\cdot p(v_j,v) \noindent\\
				&\leq\sum_{j=0}^{n-1}p(v_0,v_j)\cdot p\Big(v_j,\{T_{n-j}(v_j)\setminus T_{n-j-1}(v_{j+1})\},v\Big)\cdot\Big(\frac{1}{2}\Big)^{n-j}\cdot p(v_j,v)\nonumber\\
				&=\sum_{j=0}^{n-1}p(v_0,v)\cdot p\Big(v_j,\{T_{n-j}(v_j)\setminus T_{n-j-1}(v_{j+1})\},v\Big)\cdot\Big(\frac{1}{2}\Big)^{n-j}\nonumber
			\end{align}
			(where the inequality on the second line again follows from the fact that all vertices have degree at least three).  Using the inequalities in \eqref{hmfh} we see that if $n\leq 2$, then $p(v_0,v)-f(v)\leq\frac{3}{4}p(v_0,v)$, thus implying that $f(v)\geq\frac{1}{4}p(v_0,v)$.  If $n>2$ then we have to do a little bit more work.  First, we set $m:=\text{deg}(v_{n-2})$ and let $u_1,\dots,u_{m-2}$ be the children of $v_{n-2}$ (other than $v_{n-1}$).  In addition, for each $u_j$ we define $r_j:=\sum_{v'\in T_1(u_j)}\frac{p(v',v_{n-2})}{\text{deg}(u_j)}$.  Now once again using \eqref{hmfh}, we can achieve the bound
			\begin{align}\label{bndpmft}&p(v_0,v)-f(v)\leq\sum_{j=0}^{n-3}p(v_0,v)\cdot p\Big(v_j,\{T_{n-j}(v_j)\setminus T_{n-j-1}(v_{j+1})\},v\Big)\cdot\Big(\frac{1}{2}\Big)^{n-j}\\&+p(v_0,v)\cdot p\Big(v_{n-2},\{T_2(v_{n-2})\setminus T_1(v_{n-1})\},v\Big)\cdot\frac{\sum_{j=1}^{m-2}r_j\cdot\frac{\text{deg}(u_j)-1}{\text{deg}(u_j)}}{\sum_{i=1}^{m-2}\frac{\text{deg}(u_i)-1}{\text{deg}(u_i)}}\nonumber\\&+p(v_0,v)\cdot\frac{p\Big(v_{n-2},v_{n-1},\{T_2(v_{n-2})\setminus T_1(v_{n-1})\}\Big)}{p(v_{n-2},v_{n-1})}\cdot p\Big(v_{n-1},\{T_1(v_{n-1})\setminus T_0(v)\},v\Big)\cdot\frac{1}{2}\nonumber\\&\leq\frac{p(v_0,v)}{4}+p(v_0,v)\cdot p\Big(v_{n-2},\{T_2(v_{n-2})\setminus T_1(v_{n-1})\}\Big)\cdot\frac{\sum_{j=1}^{m-2}r_j\cdot\frac{\text{deg}(u_j)-1}{\text{deg}(u_j)}}{\sum_{i=1}^{m-2}\frac{\text{deg}(u_i)-1}{\text{deg}(u_i)}}\nonumber\\&+\frac{p(v_0,v)}{2}\cdot\frac{p\Big(v_{n-2},v_{n-1},\{T_2(v_{n-2})\setminus T_1(v_{n-1})\}\Big)}{p(v_{n-2},v_{n-1})}\nonumber\end{align}
			(where the ratio of sums on the second line represents the probability that, conditioned on hitting $\{T_2(v_{n-2})\setminus T_1(v_{n-1})\}$, simple random walk eventually returns to $v_{n-2}$).  For each $u_j$, we now set $s_j:=\frac{\sum_{v'\in T_1(u_j)}p(v',u_j)}{\text{deg}(u_j)-1}$ (the probability, conditioned on hitting $T_1(u_j)$, that simple random walk ever returns to $u_j$), and note that $$r_j=\frac{s_j}{\text{deg}(u_j)}+\frac{\text{deg}(u_j)-1}{\text{deg}(u_j)}s_j r_j\implies r_j=\frac{s_j}{\text{deg}(u_j)-(\text{deg}(u_j)-1)s_j}.$$Letting $c_1$ and $c_2$ represent $p(v_{n-3},v_{n-2})$ and $p(v_{n-1},v_{n-2})$ respectively, we now see that $$p\Big(v_{n-2},\{T_2(v_{n-2})\setminus T_1(v_{n-1})\}\Big)=\frac{\sum_{j=1}^{m-2}\frac{\text{deg}(u_j)-1}{\text{deg}(u_j)}}{\text{deg}(v_{n-2})-c_1-c_2-\sum_{i=1}^{m-2}\frac{1}{\text{deg}(u_i)}}.$$ Multiplying this last expression by the ratio of sums on the second to last line in \eqref{bndpmft}, and using the above expression for $r_j$ in terms of $s_j$, we get
			\begin{equation}\label{ne1pwnn}\frac{\sum_{j=1}^{m-2}\frac{s_j}{\text{deg}(u_j)-(\text{deg}(u_j)-1)s_j}\cdot\frac{\text{deg}(u_j)-1}{\text{deg}(u_j)}}{\text{deg}(v_{n-2})-c_1-c_2-\sum_{i=1}^{m-2}\frac{1}{\text{deg}(u_i)}}\end{equation}(note that this last expression is bounded above by $\frac{1}{4}$ on account of the fact that $r_j\leq\frac{1}{4}$ for each $j$).  Now looking at the second part of the product on the last line of \eqref{bndpmft}, we see that the numerator is equal to \begin{equation}\label{bndnn2ndt}\frac{1}{\text{deg}(v_{n-2})-c_1-\sum_{j=1}^{m-2}\frac{1}{\text{deg}(u_j)}}\end{equation}
			and the denominator is equal to \begin{equation}\label{bndnndenom2p}\frac{1}{\text{deg}(v_{n-2})-c_1-\sum_{i=1}^{m-2}\frac{1}{\text{deg}(u_i)}-\sum_{j=1}^{m-2}\frac{s_j}{\text{deg}(u_j)-(\text{deg}(u_j)-1)s_j}\cdot\frac{\text{deg}(u_j)-1}{\text{deg}(u_j)}}.\end{equation}If we now let $A_1$ and $A_2$ denote the first and second sums respectively in the denominator of \eqref{bndnndenom2p}, and then plug the expressions in \eqref{ne1pwnn}, \eqref{bndnn2ndt}, and \eqref{bndnndenom2p} into the expression to the right of the final inequality in \eqref{bndpmft}, we get the inequality
			\begin{align}\label{simpvwnnjj}
				&p(v_0,v)-f(v)\\ 
				&\leq p(v_0,v)\cdot\bigg(\frac{1}{4}+\frac{A_2}{\text{deg}(v_{n-2})-c_1-c_2-A_1}+\frac{1}{2}\cdot\frac{\text{deg}(v_{n-2})-c_1-A_1-A_2}{\text{deg}(v_{n-2})-c_1-A_1}\bigg) \nonumber\\
				&=p(v_0,v)\cdot\bigg(\frac{3}{4}+\frac{A_2}{\text{deg}(v_{n-2})-c_1-c_2-A_1}-\frac{1}{2}\cdot\frac{A_2}{\text{deg}(v_{n-2})-c_1-A_1}\bigg)\nonumber\\&=p(v_0,v)\cdot\bigg(\frac{3}{4}+\frac{1}{2}\cdot\frac{A_2}{\text{deg}(v_{n-2})-c_1-c_2-A_1}\cdot\Big(1+\frac{c_2}{\text{deg}(v_{n-2})-c_1-A_1}\Big)\bigg).\nonumber\end{align}
			Next observe that, since $\frac{A_2}{\text{deg}(v_{n-2})-c_1-c_2-A_1}$ is equal to the expression in \eqref{ne1pwnn}, this means it is bounded above by $\frac{1}{4}$.  In addition, since $c_1$ and $c_2$ are each bounded above by $\frac{1}{2}$ (recall that they're return probabilities), and since all vertices of $T$ have degree at least $3$, it follows that $$\frac{c_2}{\text{deg}(v_{n-2})-c_1-A_1}\leq\frac{1/2}{m-\frac{1}{2}-\frac{m-2}{3}}=\frac{1/2}{\frac{2m}{3}+\frac{1}{6}}\leq\frac{3}{13}$$(where the last inequality follows from plugging in $m=3$).  Substituting the values of $\frac{1}{4}$ and $\frac{3}{13}$ for the two corresponding rational expressions on the last line of \eqref{simpvwnnjj}, we now find that \begin{equation}\label{fbndpmfjj}p(v_0,v)-f(v)\leq p(v_0,v)\cdot\Big(\frac{3}{4}+\frac{1}{2}\cdot\frac{1}{4}\cdot\frac{16}{13}\Big)=\frac{47}{52}\implies f(v)\geq\frac{5}{52}\cdot p(v_0,v).\end{equation}Combining this with the fact that $f(v)\geq\frac{1}{4}\cdot p(v_0,v)$ for $n\leq 2$, and the fact that ${\sf HARM}_T(v)\leq p(v_0,v)$, we can now conclude that the inequality on the left in \eqref{fboundbh} must hold for $C=11$.  Alongside the upper bound in \eqref{fboundbh} that we established for $C=2$ (and therefore $C=11$ as well), this establishes \eqref{fboundbh} for the case where each vertex of $T$ has degree at least three.
			
			To address the case where the root of $T$ has only two children, we let $T^*$ be the tree we obtain by attaching the root of the binary tree $\mathbb{T}_2$ to the root of $T$ with an edge (where the root of $T^*$ is defined to be the root vertex of $T$).  Now let $v$ be a level $n$ vertex of $T$ (with $n\geq 1$).  Since every non-root vertex of $T$ has at least two children, it follows that the probability that simple random walk starting at the root of $T^*$ escapes through one of the two level $1$ verteices of $T$ (rather than the third level $1$ vertex that was added to $T$ in order to obtain $T^*$) is at least $\frac{2}{3}$.  Hence, from this we can conclude that \begin{equation}\label{flbfcrh2c}{\sf HARM}_{T^*}(v)\leq{\sf HARM}_T(v)\leq\frac{3}{2}{\sf HARM}_{T^*}(v).\end{equation}Similarly the fact that each non-root vertex in $T$ has at least two children also implies that the probability that the first level $n$ vertex hit by simple random walk starting at the root of $T^*$ is in $T$, is at least $\frac{2}{3}$.  Thus we can also conclude that \begin{equation}\label{flbfcrh2c2}f_{T^*}(v)\leq f_T(v)\leq\frac{3}{2}f_{T^*}(v)\end{equation}(where the subscripts $T$ and $T^*$ indicate which of the two trees we are using to calculate $f$).  Since we know \eqref{fboundbh} must apply for $T^*$, it now follows from \eqref{flbfcrh2c} and \eqref{flbfcrh2c2} that it applies for $T$ as well (for $C=\frac{3}{2}\cdot 11=\frac{33}{2}$), thus completing the proof of the lemma.
		\end{proof}
		
		\section{Comparing measures on the space of trees}\label{app:comp}
		We now tie up the loose ends of Section \ref{sec:comp}

		\begin{proof}[Proof of Lemma \ref{lem:lljgyhiyy}]
			It will suffice to show that there exists $C\in(1,\infty)$ such that, for any $n\geq 1$ and any event $A$ in the space of rooted trees for which ${\sf GW}(A)>0$, we have \begin{equation}\label{2sbagwna}\frac{{\sf GW}(A)}{C}\leq{\sf AGW}_n(A)\leq C\cdot{\sf GW}(A).\end{equation}Letting $\hat{v}$ represent the first level $n$ vertex hit by simple random walk (starting at the root) on the random tree ${\bf T}$ selected with respect to the measure ${\sf AGW}$, we observe that ${\bf T}(\hat{v})$ has distribution ${\sf GW}$.  Hence, it follows that \begin{align}
				\label{2sbagwjj}{\sf GW}(A)=\int\sum_{v'\in{\bf T}_n}{\bf 1}_{{\bf T}(v')\in A}f(v')d{\sf AGW}.
			\end{align}
			Combining this with the previous lemma, we now see that 
			\begin{align}\label{1sbndjg}{\sf AGW}_n(A)&=\int\sum_{v'\in{\bf T}_n}{\bf 1}_{{\bf T}(v')\in A}{\sf HARM}_{{\bf T}}(v')d{\sf AGW} \\
				&\leq C\int\sum_{v'\in{\bf T}_n}{\bf 1}_{{\bf T}(v')\in A}f(v')d{\sf AGW}=C\cdot{\sf GW}(A)\nonumber 
			\end{align}
			(where $C$ is the constant from Lemma \ref{lem:lljgyhi}).  Likewise, if we replace $C$ by $\frac{1}{C}$ then it follows from Lemma \ref{lem:lljgyhi} that the inequality in \eqref{1sbndjg} holds in the other direction.  Hence, the proof is complete.
		\end{proof}

		\begin{proof}[Proof of Lemma \ref{lem:GW1}]
			Let $v^{(1)},\dots,v^{(j)}$ be the vertices of ${\bf T}_1$ and note that for each $i\leq j$ we have $$\frac{1-p(v^{(i)},{\bf 0})}{|{\bf T}_1|}\leq {\sf HARM}_{{\bf T}}(v^{(i)})\leq p({\bf 0},v^{(i)})\,.$$
			By Lemma\ref{lem:1/2} we have $\frac{1-p(v^{(i)},{\bf 0})}{|{\bf T}_1|} \geq \frac{1}{2|\bT_1|}$.  For the upper bound, write \begin{align*} p({\bf 0},v^{(i)}) &\leq \sum_{k = 0}^\infty \P(\text{Simple random walk first steps to } v^{(i)} \text{ from }{\bf 0} \text{ after }k \text{ returns to } {\bf 0}) \\
				&\leq \frac{1}{|\bT_1|}\sum_{k = 0}^\infty 2^{-k}  = \frac{2}{|\bT_1|}
			\end{align*}
			where the second inequality is via Lemma \ref{lem:1/2}.

			We thus have  $$\frac{1}{2|{\bf T}_1|}\leq{\sf HARM}_{{\bf T}}(v_i)\leq\frac{2}{|{\bf T}_1|}.$$Combining this with the fact that, if we choose a vertex $v'$ uniformly at random from ${\bf T}_1$ (while conditioning on $|{\bf T}_1|=j$) then ${\bf T}(v')$ has law ${\sf GW}$, we can conclude that
			\begin{equation*}
				\frac{1}{2}\leq\frac{d{\sf GW}^{(j)}_1}{d{\sf GW}}\leq 2.  \hfill \qedhere
			\end{equation*}
		\end{proof}
		
	\end{appendix}

	\section*{Acknowledgments}
		The authors are grateful to the anonymous referees for their numerous comments which helped streamline and improve the draft.		First-named author supported in part by NSF grants DMS-2137623 and DMS-2246624.  Second-named author supported by a Zuckerman STEM Postdoctoral Fellowship, as well as by ISF grant 1207/15, and ERC
		starting grant 676970 RANDGEOM.



\bibliographystyle{plain} 
\bibliography{frogGW.bib}       

\begin{thebibliography}{10}

\bibitem{alves}
O.~S.~M. Alves, F.~P. Machado, and S.~Yu. Popov.
\newblock The shape theorem for the frog model.
\newblock {\em Ann. Appl. Probab.}, 12(2):533--546, 2002.

\bibitem{DP}
Christian D\"{o}bler and Lorenz Pfeifroth.
\newblock Recurrence for the frog model with drift on {$\Bbb Z^d$}.
\newblock {\em Electron. Commun. Probab.}, 19:no. 79, 13, 2014.

\bibitem{gantert-schmidt}
N.~Gantert and P.~Schmidt.
\newblock Recurrence for the frog model with drift on {$\Bbb Z$}.
\newblock {\em Markov Process. Related Fields}, 15(1):51--58, 2009.

\bibitem{GNR}
Arka Ghosh, Steven Noren, and Alexander Roitershtein.
\newblock On the range of the transient frog model on {$\Bbb{Z}$}.
\newblock {\em Adv. in Appl. Probab.}, 49(2):327--343, 2017.

\bibitem{HJJ1}
Christopher Hoffman, Tobias Johnson, and Matthew Junge.
\newblock From transience to recurrence with {P}oisson tree frogs.
\newblock {\em Ann. Appl. Probab.}, 26(3):1620--1635, 2016.

\bibitem{HJJ2}
Christopher Hoffman, Tobias Johnson, and Matthew Junge.
\newblock Recurrence and transience for the frog model on trees.
\newblock {\em Ann. Probab.}, 45(5):2826--2854, 2017.

\bibitem{LPP}
Russell Lyons, Robin Pemantle, and Yuval Peres.
\newblock Ergodic theory on {G}alton-{W}atson trees: speed of random walk and
  dimension of harmonic measure.
\newblock {\em Ergodic Theory Dynam. Systems}, 15(3):593--619, 1995.

\bibitem{LPP-biased}
Russell Lyons, Robin Pemantle, and Yuval Peres.
\newblock Biased random walks on {G}alton-{W}atson trees.
\newblock {\em Probab. Theory Related Fields}, 106(2):249--264, 1996.

\bibitem{LP}
Russell Lyons and Yuval Peres.
\newblock {\em Probability on trees and networks}, volume~42 of {\em Cambridge
  Series in Statistical and Probabilistic Mathematics}.
\newblock Cambridge University Press, New York, 2016.

\bibitem{MW}
Sebastian M\"{u}ller and Gundelinde~Maria Wiegel.
\newblock On transience of frogs on {G}alton-{W}atson trees.
\newblock {\em Electron. J. Probab.}, 25:Paper No. 152, 30, 2020.

\bibitem{pemantle-stacey}
Robin Pemantle and Alan~M. Stacey.
\newblock The branching random walk and contact process on {G}alton-{W}atson
  and nonhomogeneous trees.
\newblock {\em Ann. Probab.}, 29(4):1563--1590, 2001.

\bibitem{popov-random}
S.~Yu. Popov.
\newblock Frogs in random environment.
\newblock {\em J. Statist. Phys.}, 102(1-2):191--201, 2001.

\bibitem{rolla}
Leonardo~T. Rolla.
\newblock Activated random walks on {$\Bbb{Z}^d$}.
\newblock {\em Probab. Surv.}, 17:478--544, 2020.

\bibitem{Rosenberg}
Josh Rosenberg.
\newblock Recurrence of the frog model on the 3,2-alternating tree.
\newblock {\em ALEA Lat. Am. J. Probab. Math. Stat.}, 15(2):811--836, 2018.

\bibitem{TW}
Andr\'{a}s Telcs and Nicholas~C. Wormald.
\newblock Branching and tree indexed random walks on fractals.
\newblock {\em J. Appl. Probab.}, 36(4):999--1011, 1999.

\end{thebibliography}


\end{document}